\documentclass[10pt,reqno]{amsart}
\usepackage{amsmath,amssymb}
\usepackage[arrow,matrix,curve]{xy}

\usepackage{mathrsfs,bbm,eucal}
\theoremstyle{plain}
\newtheorem{thm}{Theorem}
\theoremstyle{plain}

\theoremstyle{plain}
\newtheorem{prop}[thm]{Proposition}
\theoremstyle{plain}

\newtheorem{cor}[thm]{Corollary}
\theoremstyle{definition}
\newtheorem{defn}[thm]{Definition}
\theoremstyle{remark}
\newtheorem{rem}[thm]{Remark}

\DeclareMathAlphabet{\mathpzc}{OT1}{pzc}{m}{it}

\newcommand{\dirac}{\mbox{$\mathcal{D}\!\!\!\!\!\:/\!\;$}}
\newcommand{\spinor}{\mbox{$S\!\!\!\!\!\:/\;\!$}}
\newcommand{\bundle}[1]{\CMcal{#1}}

\newcommand{\R}{\mathbbm{R}}
\newcommand{\C}{\mathbbm{C}}

\newcommand{\Q}{\mathbbm{Q}}
\renewcommand{\H}{\mathscr{H}}
\newcommand{\He}{\mathscr{H}^\partial }
\newcommand{\Z}{\mathbbm{Z}}
\newcommand{\N}{\mathbbm{N}}

\newcommand{\ke}{\mathpzc{k}}
\newcommand{\BU}{\mathrm{B}\mathrm{U}}
\newcommand{\group}{G}

\begin{document}
\title{Homology of finite K--area}
\author{Mario Listing}
\address{Mathematisches Institut, Albert-Ludwigs-Universit\"at Freiburg, Eckerstra\ss e 1, 79104 Freiburg, Germany}
\email{mario.listing@math.uni-freiburg.de}
\thanks{Supported by the German Science Foundation and in Part by SFB/TR 71}
\begin{abstract}
We use Gromov's K--area to define a generalized homology theory on compact smooth manifolds. In fact, this theory collects obstructions to the enlargeability of the manifold and its nontrivial submanifolds. Moreover, using the K--area homology we can rephrase some classic results about positive scalar curvature.
\end{abstract}
\keywords{K--area, enlargeable manifolds, homology theory, scalar curvature}
\subjclass[2000]{55N20 (57R20,57R19,53C21)}
\maketitle

\section{Introduction}
Gromov introduced in \cite{Gr01} the notion of K--area for Riemannian manifolds and proved that this area is finite for closed spin manifold of positive scalar curvature. In this result the K--area replaces the concept of enlargeability considered by Gromov and Lawson in \cite{GrLa2,GrLa3}. Apart from this interesting fact, the K--area is not  entirely bound to positive scalar curvature, because finite K--area of a compact manifold depends only on the homotopy type of the manifold whereas the existence of positive scalar curvature depends on the differentiable structure. So far the best obstruction for positive scalar curvature on spin manifolds is the vanishing of an invariant $\alpha ^\R _\mathrm{max}(M)\in \mathrm{KO}_n(C^*_{\mathrm{max},\R }\pi _1(M))$ introduced by Rosenberg in \cite{Ros1,Ros2}. In particular, $\alpha ^\R _\mathrm{max}(M)$ generalizes the Atiyah--Milnor--Singer invariant and does not vanish for enlargeable spin manifolds which was recently proved by Hanke and Schick in \cite{HaSch1,HaSch2}. The advantage of the K--area is the intrinsic definition, but also its behaviour under topological methods. Moreover, concerning positive scalar curvature it is an open question  if the case of infinite K--area on closed spin manifolds is covered by the condition $\alpha ^\R _\mathrm{max}(M)\neq 0$. 

We generalize Gromov's definition and consider the K--area of a compact Riemannian manifold $(M,g)$ w.r.t.~a homology class $\theta \in H_{*}(M;G)$ where $H_*(M;G)$ means singular homology of $M$ with coefficients in an abelian group $G$. This leads to the definition of the homology groups with finite K--area respectively the K--area homology. In fact, the set of homology classes $\theta \in H_k(M;G)$ with finite K--area determine a subgroup $\H _k(M;G)\subset H_k(M;G)$ which is independent on the choice of the Riemannian metric on $M$. Moreover, the induced homomorphism of a continuous map $f:M\to N$ restricts to homomorphisms $f_*:\H _k(M;G)\to \H _k(N;G)$ which proves that $\H _k(M;G)$ depends only on the homotopy type of the compact manifold $M$. In the paper we give some examples of the K--area homology which show its nontrivial character. For instance, compact orientable surfaces of positive genus and tori have trivial K--area homology whereas $\H _{2k}(M)=H_{2k}(M)$ for all $k>0$ if $M$ is a closed orientable manifold with finite fundamental group. Because the fundamental class of the sphere $S^k $ has finite K--area for all $k\geq 2$ we conclude as an application that the image of the Hurewicz homomorphism $h_k:\pi _k(M)\to H _k(M)$ is contained in $\H _k(M)$ for all $k\geq 2$. Conversely, the fundamental class of a connected compactly enlargeable manifold $M^n$ has infinite K--area which means $\H _n(M)=\{ 0\} $.  Moreover, if $M$ is connected and compactly $\widehat{A}$--enlargeable, then the Poincare dual of the total $\widehat{A}$--class of $M$ has infinite K--area, i.e.~$\widehat{\mathbf{A}}(M)\cap [M]\notin \H _*(M;\Q )$.
Here, compactly enlargeable respectively compactly $\widehat{A}$--enlargeable refer to finite coverings in the definition of enlargeability (cf.~\cite{LaMi}). Moreover, we show that a closed connected spin manifold $M^n$ of positive scalar curvature satisfies $\widehat{\mathbf{A}}(M)\cap [M]\in \H _*(M;\Q )$ and $\H _{n-1}(M)=H _{n-1}(M)$ which together with the above observation can be seen as a restatement of some classic results about positive scalar curvature. Note that $\H _0(M;\Q )=\H _1(M;\Q )=\{ 0\} $ for any compact manifold, i.e.~this result covers the original obstruction of the vanishing $\widehat{A}$--genus.  In the last section we introduce the relative K--area of a class $\theta \in H_k(M,A)$ where $A$ is a compact submanifold of $M$. This leads to the subgroups $\H _k(M,A)\subset H_k(M,A)$ of homology classes with finite relative K--area. In fact, this relative version satisfies the excision property and $\H _k(M,A)$ is isomorphic to $\H _k(M/A)$ for all $k$ if $M/A$ is a smooth manifold. Hence, the subfunctor $\H \subset H$ determines a generalized homology theory on pairs of compact smooth manifolds and continuous maps which satisfies the Eilenberg--Steenrod axioms up to exactness. In the end  we give an extension of the K--area homology to pairs of topological spaces. Although the K--area homology puts some classic results about positive scalar curvature in an interesting setting, it can obviously not cover ''exotic'' index theoretic obstructions like  the nonvanshing of $\alpha _\mathrm{max}^\R (M)$ on certain exotic spheres. However, as a generalized homology theory it contains topolocigal data which can not be seen by singular homology. 

Note that Brunnbauer and Hanke introduced in \cite{pre_BrHa} the small group homology which is closely related to our K--area homology. In fact, using the results in this paper and in \cite{pre_BrHa} one can show that $\H _*(M;\Q )$ is a subspace of the small group homology $H_*^{\mathrm{sm}(P)}(M;\Q )$ if $P$ denotes the largeness property ''compactly enlargeable''. 
\section{The K--area of a homology class}
Let $M_g:=(M,g)$ be a compact Riemannian manifold possibly with nonempty boundary. In order to obtain the additivity axiom  and in view of the topological K--theory the fiber dimension of a vector bundle on $M$ is not assumed to be a global constant, it is only constant on connected components. Suppose that $\mathrm{BU}_n$ is the classifying space of $\mathrm{U}(n)$, then $n$--dimensional Hermitian vector bundles on $M$ are classified up to isomorphism by $[M,\mathrm{BU}_n]$.  Hence, $[M,\BU ] $ classifies Hermitian vector bundles on $M$ up to isomorphism where $\BU =\coprod _n\mathrm{BU}_n$  is the disjoint (topological) sum. If $\bundle{E}\to M$ is a (smooth) Hermitian vector bundle with Hermitian connection, we denote by $\rho ^\bundle{E}:M\to \BU $ the classifying map and define
\[
\| R^\bundle{E}\| _g:=\max _{v, w\in TM}\frac{|R^\bundle{E}(v\wedge w)|_{op}}{|v\wedge w |_g}
\] 
where $R^\bundle{E}:\Lambda ^2TM\to \mathrm{End}(\bundle{E})$ means the curvature of $\bundle{E}$ and $|.| _{op}$ is the operator norm on $\mathrm{End}(\bundle{E})$. It is quite essential for the theory to use the operator norm on $\mathrm{End}(\bundle{E})$ because the equivalence of norms on finite dimensional vector spaces includes a constant usually depending on the dimension and in case of infinite K--area, the rank of the interesting bundles tend to infinity. Suppose $\theta \in H_{2*}(M;\group )$ for a coefficient group $\group $ where omitting the coefficient group means as usual $G=\Z $. Then $\mathscr{V}(M;\theta )$ denotes the set of all Hermitian bundles $\bundle{E}\to M$ endowed with a Hermitian connection such that the image of $\theta $ under the induced homomorphism $\rho ^\bundle{E}_*:H_*(M;\group )\to H_*(\BU ;\group )$ is nontrivial: $\rho ^\bundle{E}_*(\theta )\neq 0$. Since the classifying map is uniquely determined up to homotopy, $\mathscr{V}(M;\theta )$ does not depend on the choice of $\rho ^\bundle{E}$. The $\Z $--cohomology ring of $\mathrm{BU}_n$ is a $\Z $--polynomial ring  generated by the Chern classes which supplies the following alternative definition of $\mathscr{V}(M;\theta )$. Suppose that $M$ is connected, $2n\geq \dim M$ and $\theta \in H_{2*}(M)$, then $\mathscr{V} (M;\theta )$ is the set of Hermitian bundles $\bundle{E}\to M$ endowed with a Hermitian connection such that there is a polynomial $p\in \Z [c_1,\ldots ,c_n] $ with $\left< p(c(\bundle{E})),\theta \right> \neq 0$ where $c(\bundle{E})$ is the total Chern class of $\bundle{E}$ and
\[
p(c(\bundle{E})):=p(c_1(\bundle{E}),\ldots ,c_n(\bundle{E}))\in H^{2*}(M;\Z  ).
\]
Of course, if $\bundle{E}\in \mathscr{V} (M;\theta )$, then $\left< p(c(\bundle{E})),\theta \right> \neq 0$ for a monomial $p$, i.e.~there is a nonvanishing $\theta $--Chern number:
\[
\left< c_{1}(\bundle{E})^{j_1}\cdots c_{n}(\bundle{E})^{j_n},\theta \right> \neq 0
\]
for certain nonnegative integers $j_1\ldots ,j_n$. The equivalence of the two descriptions is a simple exercise in algebraic topology because $H^*(\mathrm{BU}_m;\Z )=\Z [c_1,\ldots ,c_m]$, $m=\mathrm{rk}_\C (\bundle{E})$, yields the nondegeneracy of the pairing $\left< .,.\right> $ and $\rho ^\bundle{E}_*(\theta )\neq 0$ implies the existence of a characteristic class $u\in H^*(\mathrm{BU}_m;\Z )$ with $\left< u,\rho ^\bundle{E}_*(\theta )\right> \neq 0$, i.e.~we choose $p(c(\bundle{E}))=(\rho ^\bundle{E})^*u\in H^{2*}(M;\Z )$.  

If $\mathscr{V} (M;\theta )\neq \emptyset $, the \emph{K--area of a compact Riemannian manifold $M_g=(M,g)$ w.r.t.~the homology class} $\theta \in H_{2*}(M;\group )$ is defined by
\begin{equation}
\label{defn_k}
\ke (M_g;\theta ):=\left( \inf _{\bundle{E}\in \mathscr{V}(M;\theta )}\| R^\bundle{E}\| _g \right) ^{-1}\in (0,\infty ].
\end{equation}
Moreover, we define for monotonicity reasons $\ke (M_g;\theta )=0$ in case $\mathscr{V} (M;\theta )=\emptyset $ (for instance $\theta =0$). We will frequently use this definition to introduce various K--areas by taking the infimum over different sets of vector bundles. 
\begin{rem}
Because $M$ is compact, any element of the K--group $K(M)$ can be represented by $[\bundle{E}]-[\C ^N]$ where $\bundle{E}$ is a complex vector bundle and $\C ^N$ is a flat bundle on $M$ and moreover, the Chern character map $\mathrm{ch}:K(M)\otimes \Q \to H^{2*}(M;\Q )$ is an isomorphism (cf.~\cite{AH}). Thus,  $\mathscr{V}(M;\theta )$ is nonempty if $\theta $ is a nontrivial element in $H_{2*}(M;\Q )$. 
\end{rem}
Since $\mathscr{V}(M;a\cdot \theta )= \mathscr{V}(M;\theta )$ is independent on the choice of the metric, we conclude the following scaling invariance of the K--area:
\[
\ke (M_{\mu \cdot g};a\cdot \theta )= \mu \cdot \ke (M_g;\theta )
\]
where $\mu $ is a positive constant and $a\in \Z \setminus \{ 0\} $. In order to extend the above definition for odd homology classes, we add large circles. In fact, suppose $ \theta \in H_{2*+1}(M;\group )$ then $\theta \times [S^1 ] \in H_{2*}(M\times S^1;\group )$ for a fundamental class of $S^1$. Thus, we define
\[
\ke (M_g;\theta ):=\sup _{\mathrm{d}t^2}\ke (M_g\times S^1_{\mathrm{d}t^2};\theta \times [ S^1] )
\]
where the supremum runs over all line elements $\mathrm{d}t^2$ of $S^1$ and $M_g\times S^1_{\mathrm{d}t^2}$ is endowed with the product metric. Note that the K--area on the right hand side does not depend on the choice of the fundamental class $[S^1]$. Replacing $\mathrm{d}t^2$ by $\mu \cdot \mathrm{d}t^2=\mathrm{d}\tilde t^2$ we obtain the above scaling invariance for odd homology classes. The K--area of a general class $\theta \in H_*(M;\group )$ is given by
\[
\ke (M_g;\theta ):=\max \{ \ke (M_g;\theta _\mathrm{even}),\ke (M_g;\theta _\mathrm{odd})\} .
\]
In case $\theta =[ M] $ we omit $\theta $ and simply write $\ke (M_g)$ which is the total K--area of $M$ introduced by Gromov in \cite{Gr01} and considered in \cite{Dav,Entov,Polt,pre_Lima,pre_Saval}. 
\begin{prop}
\label{proposition1}
Suppose $\theta =\sum _i\theta _i$ with $\theta _i\in H_{i}(M ^n;\group )$, then
\[
\ke (M_g;\theta )=\max \{ \ke (M_g;\theta _i)\ |\ i=0\ldots n\} .
\] 
Moreover, we obtain for $\theta ,\eta \in H_k(M ;\group )$
\[
\ke (M_g;\theta +\eta )\leq \max \{ \ke (M_g;\theta ),\ke (M_g;\eta )\} .
\]
\end{prop}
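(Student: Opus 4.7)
The plan is to decompose everything through the underlying set $\mathscr{V}(M;\theta)$ of admissible Hermitian bundles, since both claims concern how this set behaves under splitting a homology class. The main structural fact is that the classifying map $\rho^{\bundle{E}}:M\to\BU$ induces a graded homomorphism $\rho^{\bundle{E}}_*$ on singular homology, so different-degree pieces of $\rho^{\bundle{E}}_*(\theta)$ live in different graded components of $H_*(\BU;\group)$ and cannot cancel each other.

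First I would handle the case where $\theta=\sum_i\theta_i$ with all $\theta_i$ of \emph{even} degree. For any Hermitian bundle $\bundle{E}\to M$, the identity $\rho^{\bundle{E}}_*(\theta)=\sum_i\rho^{\bundle{E}}_*(\theta_i)$ holds in $H_*(\BU;\group)$, and since the summands live in distinct gradings $H_i(\BU;\group)$, one has $\rho^{\bundle{E}}_*(\theta)\neq 0$ if and only if $\rho^{\bundle{E}}_*(\theta_i)\neq 0$ for at least one $i$. Thus $\mathscr{V}(M;\theta)=\bigcup_{i\text{ even}}\mathscr{V}(M;\theta_i)$, whence
\[
\inf_{\bundle{E}\in\mathscr{V}(M;\theta)}\|R^{\bundle{E}}\|_g=\min_{i\text{ even}}\;\inf_{\bundle{E}\in\mathscr{V}(M;\theta_i)}\|R^{\bundle{E}}\|_g,
\]
and taking reciprocals gives $\ke(M_g;\theta)=\max_i\ke(M_g;\theta_i)$ in the even case. (The edge case $\mathscr{V}(M;\theta_i)=\emptyset$ is compatible with the convention $\ke(M_g;\theta_i)=0$, which corresponds to the $\inf$ being $+\infty$ on an empty set.) The subadditivity in degree $k$ even is the same argument: $\mathscr{V}(M;\theta+\eta)\subseteq\mathscr{V}(M;\theta)\cup\mathscr{V}(M;\eta)$, so the infimum over the left-hand set is bounded below by the minimum of the two infima on the right, which after inversion yields $\ke(M_g;\theta+\eta)\leq\max\{\ke(M_g;\theta),\ke(M_g;\eta)\}$.

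Next I would reduce the odd-degree cases to the even ones using the $S^1$-stabilisation already built into the definition. If $\theta_i\in H_i(M;\group)$ with $i$ odd, then $\theta_i\times[S^1]\in H_{i+1}(M\times S^1;\group)$ is even-dimensional, and the decomposition $\theta_{\mathrm{odd}}\times[S^1]=\sum_{i\text{ odd}}\theta_i\times[S^1]$ in $H_*(M\times S^1;\group)$ allows the even case above (applied on $M\times S^1_{\mathrm{d}t^2}$) to give
\[
\ke(M_g\times S^1_{\mathrm{d}t^2};\theta_{\mathrm{odd}}\times[S^1])=\max_{i\text{ odd}}\ke(M_g\times S^1_{\mathrm{d}t^2};\theta_i\times[S^1])
\]
for every line element $\mathrm{d}t^2$. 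Taking $\sup_{\mathrm{d}t^2}$ and using the elementary identity $\sup_t\max_i a_i(t)=\max_i\sup_t a_i(t)$ (a finite maximum commutes with supremum) converts this into $\ke(M_g;\theta_{\mathrm{odd}})=\max_{i\text{ odd}}\ke(M_g;\theta_i)$. The same commutation of $\sup$ with a two-element $\max$ turns the even-case subadditivity on $M\times S^1$ into the odd-degree version on $M$. Finally, combining with the defining formula $\ke(M_g;\theta)=\max\{\ke(M_g;\theta_{\mathrm{even}}),\ke(M_g;\theta_{\mathrm{odd}})\}$ packages the even and odd pieces into the statement of the proposition.

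The only step requiring any care is the last one, verifying that the outer supremum over $S^1$-metrics interacts correctly with the finite maximum; this is not a deep obstacle but it is the one place where the argument is not purely set-theoretic, and it must be invoked both for the equality (part~1) and for the inequality (part~2). Everything else follows formally from the grading of $H_*(\BU;\group)$ and the set-theoretic behaviour of $\mathscr{V}(M;\cdot)$ under summation of homology classes.
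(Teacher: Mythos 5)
Your proposal is correct and follows essentially the same route as the paper: the identity $\mathscr{V}(M;\theta)=\bigcup_i\mathscr{V}(M;\theta_i)$ from the grading of $H_*(\BU;\group)$, the inclusion $\mathscr{V}(M;\theta+\eta)\subseteq\mathscr{V}(M;\theta)\cup\mathscr{V}(M;\eta)$ for subadditivity, and the reduction of odd classes to even ones via $S^1$--stabilisation together with the interchange of the supremum over line elements with a finite maximum. Your explicit treatment of the empty-set convention and of the $\sup$/$\max$ interchange is slightly more careful than the paper's, but the argument is the same.
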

\begin{proof}
We start with the case $\theta \in H_{2*}(M;\group )$. Observe that $\mathscr{V}(M;\theta _i)\subset \mathscr{V}(M;\theta )$ because $0\neq \rho ^\bundle{E}_*(\theta _i) \in H_i(\BU ;\group )$ implies $\rho _*^\bundle{E}(\theta )\neq 0$. Conversely, if $\rho _*^\bundle{E}(\theta )\neq 0$, then $\rho ^\bundle{E}_*(\theta _i)\neq 0$ for at least one $\theta _i$ which completes the proof for even homology classes: $\mathscr{V}(M;\theta )=\bigcup _i\mathscr{V}(M;\theta _i)$. For the second statement, we obtain by the same argument $\mathscr{V}(M;\theta +\eta )\subset \mathscr{V}(M;\theta )\cup \mathscr{V}(M;\eta )$ which shows the inequality if $k$ is even. Now suppose that $\theta \in H_{2*+1}(M;\group )$, then:
\[
\begin{split}
\ke (M_g;\theta )&=\sup _{\mathrm{d}t^2}\max \{ \ke (M_g\times S^1_{\mathrm{d}t^2};\theta _i\times [S^1])\ |\ i=1\ldots n\} \\
&=\max \Bigl\{ \sup _{\mathrm{d}t^2}\ke (M_g\times S^1_{\mathrm{d}t^2};\theta _i\times [S^1])\ |\ i=1\ldots n \Bigl\} .
\end{split}
\] 
The general case is an easy consequence. 
\end{proof}
Since $M$ is compact, for any two Riemannian metrics $g$ and $h$ on $M$ there is a constant $C\geq 1$ such that $C^{-1}\cdot \| R^\bundle{E}\| _g\leq \| R^\bundle{E}\| _h\leq C\cdot \| R^\bundle{E}\| _g$ for all bundles $\bundle{E}$. Hence, the condition $\ke (M_g;\theta )<\infty $ does not depend on the choice of the Riemannian metric on $M$ which yields the following: For each $j$, the set
\[
\H _j(M;\group ):=\{ \theta \in H_j(M;\group )\ | \ \ke (M_g;\theta )<\infty \} \subset H_j(M;\group )
\]
is  a subgroup independent on the choice of the metric $g$ and satisfies
\[
\H _*(M;\group )=\{ \theta \in H_*(M ;\group )\ | \ \ke (M_g;\theta )<\infty \} =\bigoplus \H _j(M;\group ).
\]
If $\theta \in H_0(M;\group )$ does not vanish, there are trivial bundles $\bundle{E}$ with $\rho ^\bundle{E}_*(\theta )\neq 0$. In this case we use that the fiber dimension of vector bundles is not assumed to be globally constant. Because trivial bundles admit flat connections, the K--area of $0\neq \theta \in H_0(M;\group )$ is infinite which implies $\H _0(M;\group )=\{ 0\} $. Moreover, if $\group $ is a ring, $\mathscr{V}(M;a\cdot \theta )\subset \mathscr{V}(M;\theta )$ for all $\theta \in H_{2*}(M;G)$ yields
\[
\ke (M_g;a\cdot \eta )\leq \ke (M_g;\eta )
\]
for all $\eta \in H_*(M;\group )$ and $a\in \group $. Since $H_*(\mathrm{BU}_n;\group )$ is a free $\group $--module, this is an equality if $a\neq 0$ and $\group $ has no zero divisors. Hence, for any coefficient ring $\group $, $\H _j(M;\group )$ is a $\group $--submodule of $H _j(M;\group )$. 

Let $M\coprod N$ be the disjoint sum of $M$ and $N$, then for any $\theta \in H_{2*}(M\coprod N;G)$ the interesting bundles in $\mathscr{V}(M\coprod N;\theta )$ are determined by $\mathscr{V}(M;\theta _{|M})\cup \mathscr{V}(N;\theta _{|N})$  where $\theta _M$ and $\theta _N$ mean the restriction of $\theta $ to $M$ and $N$. Thus, the K--area of $\theta \in H_k(M\coprod  N;G)$ equals the maximum of the K--area of $\theta _M$ and the K--area of $\theta _N$ which proves the additivity axiom
\[
\H _k\left( M\coprod N;G\right) \cong \H _k(M;G)\oplus \H _k(N;G).
\]
\begin{prop}
\label{proposition2}
Let $f:(M,g)\to (N,h)$ be a smooth map with $g\geq f^*h$ on $\Lambda ^2TM$, then
\[
\ke (M_g;\theta )\geq \ke (N_h;f_*\theta ).
\]
In fact, for each continuous map $f:M\to N$, the induced homomorphism on singular homology yields homomorphisms $f_*:\H _j(M;\group )\to \H _j(N;\group )$.

Moreover, if $M\stackrel{i}{\hookrightarrow} N$ is a compact submanifold and a retract, then for suitable metrics $h$ on $N$
\[
\ke (M_h;\theta )=\ke (N_h;i_*\theta ),
\]
here suitable means that the smooth retraction map $r:N\to M$ is $1$--Lipschitz, i.e.~$r^*h_{|M}\leq h$ on $\Lambda ^2TN$. 
\end{prop}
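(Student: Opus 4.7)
For the even case $\theta \in H_{2*}(M;\group )$, my strategy is pullback of bundles. Given $\bundle{E}\in \mathscr{V}(N;f_*\theta )$ with its Hermitian connection, I would form the pullback $f^*\bundle{E}\to M$ with the pulled-back connection. Naturality of the classifying map gives $\rho ^{f^*\bundle{E}}\simeq \rho ^\bundle{E}\circ f$, so $(\rho ^{f^*\bundle{E}})_*\theta =(\rho ^\bundle{E})_*f_*\theta \neq 0$, placing $f^*\bundle{E}$ in $\mathscr{V}(M;\theta )$. The pullback connection has curvature $R^{f^*\bundle{E}}=f^*R^\bundle{E}$, so for any $\xi \in \Lambda ^2T_xM$,
\[
|R^{f^*\bundle{E}}(\xi )|_{op}=|R^\bundle{E}(f_*\xi )|_{op}\leq \| R^\bundle{E}\| _h\cdot |f_*\xi |_h\leq \| R^\bundle{E}\| _h\cdot |\xi |_g
\]
by the hypothesis $g\geq f^*h$ on $\Lambda ^2TM$. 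Taking the supremum over $\xi $ and the infimum over $\mathscr{V}(N;f_*\theta )$ yields $\ke (M_g;\theta )\geq \ke (N_h;f_*\theta )$ (the cases $f_*\theta =0$ or $\mathscr{V}(N;f_*\theta )=\emptyset $ being trivial by the $\ke =0$ convention).

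For odd $\theta \in H_{2*+1}(M;\group )$ I plan to apply the even case to $f\times \mathrm{id}_{S^1}\colon M\times S^1\to N\times S^1$, which maps $\theta \times [S^1]$ to $f_*\theta \times [S^1]$, and then to take suprema over the line element $\mathrm{d}t^2$; Proposition~\ref{proposition1} then reduces the mixed case. The main obstacle here is that the hypothesis $g\geq f^*h$ only on $\Lambda ^2TM$ does not automatically extend to $\Lambda ^2T(M\times S^1)$ with product metrics, because a bivector on the product decomposes as $\alpha +\beta \wedge \partial _t$ and the second summand is controlled by the pointwise $TM$--norm of $\beta $, which the hypothesis does not bound. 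One has to choose the line element on the target $S^1$ carefully (or invoke compactness of $M$ and smoothness of $f$ for a uniform Lipschitz bound, as in the next step) to dominate this term; this is the step requiring the most care.

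For the continuous-map assertion I would first replace $f$ by a homotopic smooth $\tilde f$, which leaves $f_*$ unchanged on homology. For any metric $h$ on $N$, compactness of $M$ and smoothness of $\tilde f$ supply a Lipschitz constant $L$ with $L^2g\geq \tilde f^*h$ on $TM$, hence on $\Lambda ^2TM$. The first inequality combined with the scaling law $\ke (M_{\mu g};\theta )=\mu \ke (M_g;\theta )$ then gives, for any $\theta \in \H _j(M;\group )$,
\[
\ke (N_h;f_*\theta )\leq \ke (M_{L^2g};\theta )=L^2\ke (M_g;\theta )<\infty ,
\]
so $f_*\theta \in \H _j(N;\group )$.

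Finally, for the retract equality I would apply the first inequality twice. The isometric inclusion $i$ satisfies $h_{|M}=i^*h$, giving $\ke (M_h;\theta )\geq \ke (N_h;i_*\theta )$. Conversely, the hypothesis $r^*h_{|M}\leq h$ on $\Lambda ^2TN$ applied to the retraction $r$ with class $i_*\theta $ yields $\ke (N_h;i_*\theta )\geq \ke (M_h;r_*i_*\theta )=\ke (M_h;\theta )$, using $r\circ i=\mathrm{id}_M$. Combining these two inequalities gives the desired equality.
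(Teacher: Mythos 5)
Your proposal is correct and follows essentially the same route as the paper: pullback of bundles via naturality of classifying maps for the even case, passage to $f\times \mathrm{id}$ on $M\times S^1$ with a rescaled line element (justified by compactness of $M$) for the odd case, smooth approximation plus the scaling law for continuous maps, and a double application of the inequality to $i$ and $r$ for the retract. The point you flag as delicate — that $g\geq f^*h$ on $\Lambda ^2TM$ does not directly yield the needed inequality on $\Lambda ^2T(M\times S^1)$ — is resolved exactly as you indicate, and as the paper does, by choosing $\mathrm{d}\tilde t^2$ large enough relative to a uniform Lipschitz constant for $f$.
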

\begin{proof}
We start with the case $\theta \in H_{2*}(M;\group )$. Since
\[
\bigl( \rho ^{f^*\bundle{E}}\bigl) _*(\theta )=\left( \rho ^\bundle{E}\circ f\right) _*(\theta )=\rho ^\bundle{E}_*(f_*\theta ),
\]
the pull back of vector bundles yields a map $f^*:\mathscr{V}(N;f_*\theta )\to \mathscr{V}(M;\theta )$. Moreover, $g\geq f^*h$ on $\Lambda ^2TM$ supplies
\[
\| R^{f^*\bundle{E}}\| _g\leq \| R^\bundle{E}\| _h
\]
which proves the inequality. If $\theta \in H_{2*+1}(M;\group )$ we consider the map $f\times \mathrm{id}:M\times S^1\to N\times S^1$ and apply the case for even homology classes. The necessary inequality on $\Lambda ^2T(M\times S^1)$ follows from the compactness of $M$ because for any line element $\mathrm{d}t^2$, there is some $\mathrm{d}\tilde t^2$ such that $g\oplus \mathrm{d}\tilde t^2\geq  (f\times \mathrm{id})^*(h\oplus \mathrm{d}t^2)$ on $\Lambda ^2T(M\times S^1)$. The second observation is proved for smooth $f:M\to N$ by the inequality and in case of continuous $f$ we use the smooth approximation theorem and the homotopy invariance of the induced homomorphism. For the last statement it remains to show ''$\leq $'' but this follows by considering a smooth retraction map $r:N\to M$ (using the smooth approximation theorem: every retract $M\subset N$ for compact manifolds $M$ and $N$ is also a smooth retract). 
\end{proof}

This shows that $\H _*(.;G)$ is a functor on the category of compact smooth manifolds and continuous maps into the category of graded abelian groups which satisfies the homotopy, dimension and additivity axiom. In fact, $\H _*(M;G)$ depends only on the homotopy type of $M$.  Gromov proved in \cite{Gr01} that the total K--area of simply connected manifolds and spin manifolds of positive scalar curvature is finite which means $\H _n(M^n)=H_n(M) $ for these closed manifolds. Furthermore, using the above proposition and the observation that $H_{2j}(\C P^n )$ is generated by the fundamental class of $\C P^j\subset \C P^n$ we conclude for the complex projective spaces
\[
\H _k(\C P^n)=\biggl\{ \begin{array}{cl}
\Z &if \ \ k\in \{ 2,4,\ldots ,2n\}\\
0& otherwise.
\end{array}
\]
Conversely, every closed connected orientable surface $M$ of positive genus has infinite total K--area, i.e.~$\H _2(M)=\{ 0\}$ (cf.~\cite{Gr01}). If $\H _k(N)$ is known, the previous proposition is one of best ways to compute $\H _k(M)$ by considering maps $M\to N$ respectively $N\to M$.  
\begin{cor}
The Hurewicz homomorphism satisfies for all $j\geq 2$:
\[
h_j:\pi _j(M)\to \H _j(M)\subset H_j(M).
\]
\end{cor}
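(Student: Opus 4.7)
The plan is to reduce the corollary to a single input, namely the finiteness of the K--area of the fundamental class of a sphere, and then let the naturality statement of Proposition \ref{proposition2} do the rest. Given $\alpha \in \pi _j(M)$, pick a continuous representative $f : S^j \to M$. By the very definition of the Hurewicz homomorphism,
\[
h_j(\alpha ) = f_*[S^j] \in H_j(M).
\]
Thus it is enough to establish two things: first that $[S^j] \in \H _j(S^j)$, and second that $f_*$ restricts to a homomorphism $\H _j(S^j) \to \H _j(M)$.

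For the second point there is nothing to do: Proposition \ref{proposition2} asserts exactly that any continuous map between compact smooth manifolds induces a map on K--area homology, so $f_*\bigl(\H _j(S^j)\bigr) \subset \H _j(M)$.

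For the first point I would use that $S^j$ is simply connected for $j \geq 2$, so Gromov's result recalled in the paragraph immediately preceding the statement (the total K--area of a closed simply connected Riemannian manifold is finite) gives $\ke (S^j_g;[S^j]) < \infty $ for the round metric, hence for every metric. Equivalently $\H _j(S^j) = H_j(S^j)$, and in particular $[S^j] \in \H _j(S^j)$.

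Combining the two ingredients,
\[
h_j(\alpha ) \;=\; f_*[S^j] \;\in\; f_*\bigl(\H _j(S^j)\bigr) \;\subset\; \H _j(M),
\]
which is the desired inclusion. The corollary is really just a packaging of these two facts, so I do not foresee any genuine obstacle beyond citing the correct prior results; all of the analytic content sits in Gromov's finiteness theorem for the sphere, and all of the topological content sits in the functoriality of $\H $.
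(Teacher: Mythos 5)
Your proof is correct and follows essentially the same route as the paper: represent $\alpha$ by a map $f:S^j\to M$, use $h_j(\alpha)=f_*[S^j]$, invoke the finiteness of $\ke(S^j_g;[S^j])$ (Gromov's result for simply connected manifolds, recalled just before the statement), and conclude via the monotonicity/functoriality of Proposition \ref{proposition2}. The paper's one-line proof is exactly this argument with the finiteness of the sphere's K--area left implicit.
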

\begin{proof}
Let $f:(S^j,e)\to (M,x)$ be a representative of $\alpha \in \pi _j(M,x)$, then
\[
\infty > \ke (S^j_g;[S^j])\geq \ke (M_{\overline{g}};f_*[S^j])=\ke (M_{\overline{g}};h_j(\alpha ))
\]
for suitable metrics $g$, $\overline{g}$ on $S^j$ and $M$. 
\end{proof}

\begin{prop}
\begin{enumerate}
\item If $T\subset H_*(M)$ is the torsion subgroup, then $T\subset \H _*(M)$.
\item $\H _*(M;\Q )\cong \H _*(M)\otimes \Q $.
\item $\H _1(M;\Q )=\{ 0\} $.
\end{enumerate}
\end{prop}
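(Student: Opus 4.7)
For part (1), I would use the fact that the integral homology $H_*(\BU;\Z)$ is torsion-free: indeed $H^*(\mathrm{BU}_n;\Z)=\Z[c_1,\ldots,c_n]$ is a polynomial ring, so by the universal coefficient theorem its Kronecker-dual homology is free abelian in each degree. Consequently, for any torsion $\theta\in H_*(M;\Z)$ and any bundle $\bundle{E}$, the image $\rho^{\bundle{E}}_*(\theta)$ is a torsion element in a torsion-free group, hence zero. Therefore $\mathscr{V}(M;\theta)=\emptyset$, so $\ke(M_g;\theta)=0$ and $\theta\in\H_*(M)$. For odd-degree torsion $\theta$, the class $\theta\times[S^1]$ is still torsion by the K\"unneth formula (since $[S^1]$ is free), and the same argument applied to $M\times S^1$ closes this case.

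For part (2), the plan is to show that under the standard identification $H_*(M;\Q)\cong H_*(M)\otimes\Q$, the subgroup $\H_*(M)\otimes\Q$ corresponds to $\H_*(M;\Q)$. Writing $\sigma\in H_*(M;\Q)$ as $\sigma=\theta\otimes(1/N)$ with $\theta\in H_*(M;\Z)$ and $N\neq 0$, torsion-freeness of $H_*(\BU;\Z)$ forces $\rho^{\bundle{E}}_*(\sigma)\neq 0$ in $H_*(\BU;\Q)$ if and only if $\rho^{\bundle{E}}_*(\theta)\neq 0$ in $H_*(\BU;\Z)$. Hence $\mathscr{V}(M;\sigma)=\mathscr{V}(M;\theta)$ and $\ke(M_g;\sigma)=\ke(M_g;\theta)$, so $\sigma\in\H_*(M;\Q)$ iff $\theta\in\H_*(M;\Z)$. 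Independence of the choice of representative $(\theta,N)$ follows from part~(1) combined with the integer scaling equality $\ke(M_g;N\theta)=\ke(M_g;\theta)$ for $N\neq 0$ noted earlier. Since $\Q$ is flat over $\Z$, the inclusion $\H_*(M)\hookrightarrow H_*(M)$ remains injective after tensoring, and the argument above identifies its image with $\H_*(M;\Q)$.

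For part (3), by (2) it suffices to prove that $\H_1(M;\Z)$ consists entirely of torsion classes. Suppose $\theta\in H_1(M;\Z)$ is non-torsion. Using $H^1(M;\Z)=\mathrm{Hom}(H_1(M;\Z),\Z)=[M,S^1]$, I can find $f\colon M\to S^1$ with $f_*\theta=k[S^1]$ for some $k\neq 0$. Then $\phi:=f\times\mathrm{id}\colon M\times S^1\to T^2$ satisfies $\phi_*(\theta\times[S^1])=k[T^2]$ by naturality of the cross product. Gromov's infinite-K-area theorem for closed orientable surfaces of positive genus (quoted above from \cite{Gr01}) gives $\ke(T^2_h;[T^2])=\infty$ for every metric $h$ on $T^2$, and the integer scaling equality for $k\neq 0$ upgrades this to $\ke(T^2_h;k[T^2])=\infty$. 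Given $g$ on $M$ and $\mathrm{d}t^2$ on $S^1$, compactness lets me shrink $h$ so that $g\oplus\mathrm{d}t^2\geq\phi^*h$ on $\Lambda^2 T(M\times S^1)$; Proposition~\ref{proposition2} then forces $\ke(M_g\times S^1_{\mathrm{d}t^2};\theta\times[S^1])\geq\ke(T^2_h;k[T^2])=\infty$, and the supremum over $\mathrm{d}t^2$ yields $\ke(M_g;\theta)=\infty$.

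The formal parts (1) and (2) rest on the single clean fact that $H_*(\BU;\Z)$ is torsion-free. The substantive step is (3), whose main trick is to convert the one-dimensional class $\theta$ into a multiple of the two-dimensional fundamental class of $T^2$ via $f\times\mathrm{id}$, so that Gromov's infinite-K-area estimate for the torus can be transferred back through Proposition~\ref{proposition2}. The only subtlety, the metric comparison $g\oplus\mathrm{d}t^2\geq\phi^*h$ on $\Lambda^2$, dissolves by rescaling $h$ thanks to the compactness of $M\times S^1$.
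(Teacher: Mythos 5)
Your proposal is correct and follows essentially the same route as the paper: part (1) and the identification in part (2) both rest on the torsion-freeness of $H_*(\BU;\Z)$ and the naturality square comparing integral and rational homology, and part (3) uses the Hopf-theorem map $f:M\to S^1$, the map $f\times \mathrm{id}$ onto $T^2$, the infinite K--area of the torus from \cite{Gr01}, and proposition \ref{proposition2} to pull the estimate back, exactly as in the paper (which works with a rational class and a pairing $\left< \alpha ,\theta \right> \neq 0$ rather than first reducing to integral classes via part (2) -- a purely cosmetic difference). The only point worth spelling out is that the even/odd bookkeeping in part (2) also requires running the same diagram on $M\times S^1$, which your argument covers implicitly.
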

\begin{proof}
Since $H_*(\mathrm{BU}_n)$ is free, each induced homomorphism $H_*(M)\to H_*(\BU )$ maps $T$ to $0$. Hence, for any $\theta \in T$, $\mathscr{V}(M;\theta )=\emptyset $ and $\mathscr{V}(M\times S^1;\theta \times [S^1])=\emptyset $ supply $\ke (M_g;\theta )=0$, i.e.~$\theta \in \H _*(M)$. For the second claim we consider the commutative diagram
\[
\begin{xy}
\xymatrix{H_{2k}(M)\otimes \Q \ar[d]^{\lambda_M }\ar[r]^{\rho _*^\bundle{E}\otimes \mathrm{id}} &H_{2k}(\BU )\otimes \Q \ar[d]^\lambda \\
H_{2k}(M;\Q )\ar[r]^{\rho _*^\bundle{E}}&H_{2k}(\BU ;\Q )}
\end{xy}
\]
where $\lambda $ and $\lambda _M$ are isomorphism. Hence, $\mathscr{V}(M;\theta )=\mathscr{V}(M;\lambda _M(\theta \otimes x))$ for all $\theta \in H_{2k}(M)$ and $x\in \Q \setminus \{ 0\}$ prove that $\lambda _M:\H _{2k}(M)\otimes \Q \to \H _{2k}(M;\Q )$ is an isomorphism. In order to see $\H _{2k+1}(M)\otimes \Q \cong \H _{2k+1}(M;\Q )$ replace $M$ by $M\times S^1$ in the above diagram. Suppose $\theta \in H_1(M;\Q )$ is nontrivial, then there is some $\alpha $ in the lattice $H^1(M;\Z )\subset  H^1(M;\Q )$ with $\left< \alpha ,\theta \right> \neq 0$. Moreover, by the Hopf theorem and the smooth approximation theorem there is a smooth map $f:M\to S^1$ uniquely determined up to homotopy with $f^*\omega =\alpha $ where $\omega \in H^1(S^1;\Z )$ is the orientation class.  Consider the map $f\times \mathrm{id}:M\times S^1\to T^2$, then
\[
\left< \omega ,f_*\theta \right> =\left< f^*\omega ,\theta\right> =\left< \alpha ,\theta \right> \neq 0
\]
proves $f_*\theta \neq 0$ which yields $[T^2]=x\cdot f_*\theta \times [S^1]$ for some $x\in \Q \setminus \{ 0\} $. Hence, the scaling invariance in the homology class and proposition \ref{proposition2} show
\[
\ke (M_g\times S^1_{ \mathrm{d}t^2};\theta \times [S^1])\geq \ke (T^2_0;[T^2])=\infty .
\]
where $T_0^2$ is a suitable flat torus ($T^2$ has infinite total K--area by the remarks below or use the result in \cite{Gr01}).
\end{proof}
Although the torsion subgroup remains unchanged for $\Z $ coefficients, this does not have to hold for arbitrary coefficient groups. For instance, consider the real projective space $\R P^n$ in dimension $n\geq 2$ and let $\bundle{E}$ be the canonical complex line bundle with $0\neq c_1(\bundle{E})\in H^2(\R P^n;\Z )= \Z _2 $. The image of $c_1(\bundle{E})$ in $H^2(\R P^n;\Z _2)$ is the second Stiefel--Whitney class $w_2(\bundle{E})$ which is nontrivial. Hence, if $\theta $ denotes the generator of $ H_2(\R P^n;\Z _2)$, $\left< w_2(\bundle{E}),\theta \right> \neq 0$ implies $\bundle{E}\in \mathscr{V}(\R P^n;\theta )$. But the real Chern class of $\bundle{E}$ vanishes which means that $\bundle{E}$ admits a flat connection. This shows $\ke (\R P^n;\theta )=\infty $ and $\H _2(\R P^n;\Z _2)=\{ 0\} $ whereas  $H _2(\R P^n;\Z _2)=\Z _2$.   

\section{K--area for the Chern character}
In the above definition of the K--area we considered Hermitian bundles which have a nontrivial $\theta $--Chern number. However, we can also restrict to bundles with a specific nontrivial $\theta $--Chern number respectively fix a characteristic class $u\in H^*(\BU ;\group )$ and consider bundles with $\left< u(\bundle{E}),\theta \right> \neq 0$ where $u(\bundle{E}):=(\rho ^\bundle{E})^*u$. This leads to the $\mathrm{K} _u$--area and results similar to the one presented above with a few exceptions. But we still obtains subgroups $\H _k(M;u)\subset H_k(M;\group )$ of homology classes with finite $\mathrm{K}_u$--area and moreover, a continuous map $f:M\to N$ induces homomorphisms $f_*:\H _k(M;u)\to \H _k(N;u )$. Hence, each characteristic class $u\in H^*(\BU ;G)$ determines a functor $\H _*(.;u)$ on the category of compact smooth manifolds and continuous maps into the category of graded abelian groups which satisfies the homotopy, dimension and additivity axiom. 

Another important K--area is the K--area for the Chern character. This area is particularly interesting for scalar curvature results. If $\theta \in H_{2*}(M;\Q )$, we denote by $\mathscr{V}_\mathrm{ch} (M;\theta )$ the set of Hermitian bundles $\bundle{E}\to M$ endowed with a Hermitian connection such that $\left< \mathrm{ch}(\bundle{E})),\theta \right> \neq 0$ for the Chern character $\mathrm{ch}(\bundle{E})$. As already remarked $\mathscr{V}_\mathrm{ch} (M;\theta )$ is nonempty if $\theta \neq 0$, i.e.~we define $\ke _\mathrm{ch}(M_g;\theta )$ as above but take the infimum in (\ref{defn_k}) over bundles in $\mathscr{V}_\mathrm{ch} (M;\theta )$. We set $\ke _\mathrm{ch}(M_g;0 )=0$ and add large circles to define the $\mathrm{K} _\mathrm{ch}$--area for odd homology classes:
\[
\ke _\mathrm{ch}(M_g;\theta ):=\sup _{\mathrm{d}t^2}\ke _\mathrm{ch}(M_g\times S^1_{\mathrm{d}t^2};\theta \times [S^1]),\qquad \theta \in H_{2*+1}(M;\Q ).
\]
The $\mathrm{K} _\mathrm{ch}$--area of a general class is the maximum of the even and the odd part. Since $\mathscr{V}_\mathrm{ch}(M;\theta )\subset \mathscr{V}(M;\theta )$, we conclude
\[
\ke _\mathrm{ch}(M_g;\theta )\leq \ke (M_g;\theta )
\]
for all $\theta \in H_*(M;\Q )$. Equality seems to hold only in few examples, for instance $\ke _\mathrm{ch}(M_g)=\ke (M_g)$ if $M$ is homeomorphic to $S^n$ or $\ke _\mathrm{ch}(M_g;\theta )=\ke (M_g;\theta )$ if $\theta \in H_2(M;\Q )$.  Thus, if $\H _j(M;\mathrm{ch})$ denotes the set of all homology classes $\theta \in H_j(M;\Q )$ with $\ke _\mathrm{ch}(M_g;\theta )<\infty $, then $\H _j(M;\Q )\subset \H _j(M;\mathrm{ch})$ are linear subspaces of $H_j(M;\Q )$ for each $j$.

\begin{prop}
\label{proposition6}
Suppose $\theta \in H_{2*}(M;\Q )$ is of total degree $2m>0$, then
\[
\ke (M_g;\theta )\leq m^2\cdot \ke _\mathrm{ch}(M_g;\theta ).
\]
In particular, $\H _j(M;\Q )=\H _j(M;\mathrm{ch})$ for each $j$.
\end{prop}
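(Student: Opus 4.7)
The plan is to prove the equivalent statement that for every $\bundle{E}\in \mathscr{V}(M;\theta)$ there exists a bundle $\bundle{F}\in \mathscr{V}_{\mathrm{ch}}(M;\theta)$ with $\|R^{\bundle{F}}\|_g \le m^2\|R^{\bundle{E}}\|_g$. Inverting the defining infima then yields $\ke(M_g;\theta)\le m^2\,\ke_{\mathrm{ch}}(M_g;\theta)$, and combined with the opposite inequality $\ke_{\mathrm{ch}}\le \ke$ already observed this shows that finite $\mathrm{K}$--area and finite $\mathrm{K}_{\mathrm{ch}}$--area cut out the same classes, i.e.~$\H_j(M;\Q)=\H_j(M;\mathrm{ch})$. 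By Proposition \ref{proposition1} I may assume $\theta\in H_{2m}(M;\Q)$ is of pure degree.

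The first, algebraic step translates Chern numbers into Chern character data. By the alternative description recalled earlier, $\bundle{E}\in \mathscr{V}(M;\theta)$ supplies a monomial $q=c_1^{j_1}\cdots c_n^{j_n}$ of weighted degree $m$ with $\left< q(c(\bundle{E})),\theta\right> \ne 0$. Newton's identities together with $p_k=k!\,\mathrm{ch}_k$ rewrite $q$ as a rational polynomial of weighted degree $m$ in $\mathrm{ch}_1(\bundle{E}),\ldots,\mathrm{ch}_m(\bundle{E})$, and at least one summand survives the pairing with $\theta$: there is a partition $(\sigma_1,\ldots,\sigma_l)$ of $m$, necessarily with $l\le m$, such that
\[
\bigl\langle \mathrm{ch}_{\sigma_1}(\bundle{E})\cdots \mathrm{ch}_{\sigma_l}(\bundle{E}),\theta\bigr\rangle \ne 0.
\]

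The geometric step is to realize this $\mathrm{ch}$--monomial inside $\mathrm{ch}(\bundle{F})$ for an honest bundle built from $\bundle{E}$. The natural candidate is $\bundle{F}=\bundle{E}^{\otimes l}$, whose Chern character $\mathrm{ch}(\bundle{E})^l$ has degree--$2m$ component $\sum_{\mu_1+\cdots+\mu_l=m,\,\mu_i\ge 0}\prod_i\mathrm{ch}_{\mu_i}(\bundle{E})$. The derivation rule for induced connections on tensor products gives the curvature bound $\|R^{\bundle{E}^{\otimes l}}\|_g\le l\|R^{\bundle{E}}\|_g\le m\|R^{\bundle{E}}\|_g$. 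If this sum already pairs nontrivially with $\theta$, the argument closes with the sharper factor $m$. If it does not, the unwanted rank--dependent terms (those with some $\mu_i=0$) are removed by an inclusion--exclusion: one works with the virtual bundles $(\bundle{E}-r\cdot \C)^{\otimes l'}$ for various $l'\le l$, realizing them as honest bundles after stabilization by trivial summands (which leaves the positive--degree Chern characters unchanged and does not affect the curvature norm), and then forms integer--linear combinations. Throughout, only tensor powers of order $\le m$ occur, so the bound $m\|R^{\bundle{E}}\|_g$ on curvature norms is preserved; a second factor of $m$, corresponding to the magnitudes of the Newton and inclusion--exclusion coefficients, enters the final estimate and produces $m^2$.

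The principal obstacle is the cancellation phenomenon: $\mathrm{ch}(\bundle{E}^{\otimes l})_{2m}$ symmetrizes over all ordered compositions of $m$, so the partition $\sigma$ identified algebraically can, in principle, be washed out of the sum once paired with $\theta$. Isolating $\sigma$ while staying inside the category of honest (not merely virtual) bundles, and using only $m$--fold tensor factors of $\bundle{E}$, is the technical heart of the argument and is precisely what gives rise to the quadratic factor $m^2$ rather than a smaller or substantially larger constant.
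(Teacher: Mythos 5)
Your overall strategy is the paper's: replace $\bundle{E}\in\mathscr{V}(M;\theta)$ by an associated bundle with curvature at most $m^2\|R^{\bundle{E}}\|_g$ and nontrivial Chern character pairing, and you correctly locate the difficulty in the cancellation among the partition monomials $\prod_j\mathrm{ch}_{\sigma_j}(\bundle{E})$. But the family of test bundles you propose is too small to resolve that difficulty. Pairing with $\theta$, the unknowns are the $p(m)$ numbers $P_\sigma=\bigl<\prod_j\mathrm{ch}_{\sigma_j}(\bundle{E}),\theta\bigl>$ indexed by partitions $\sigma$ of $m$, and each bundle $\bundle{E}^{\otimes l}$ (equivalently each $(\bundle{E}-r\cdot\C)^{\otimes l}$, whose degree--$2m$ Chern character is the sum of $P_\sigma$ over partitions of length exactly $l$, with multinomial coefficients) contributes exactly one linear functional on this space. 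So for $l=1,\dots,m$ you get $m$ conditions on $p(m)$ unknowns; since $p(m)>m$ for $m\geq 4$, the vanishing of all $\bigl<\mathrm{ch}(\bundle{E}^{\otimes l}),\theta\bigl>$ does not force all $P_\sigma=0$. Concretely, at $m=4$ your functionals cannot separate $\bigl<\mathrm{ch}_3\mathrm{ch}_1,\theta\bigl>$ from $\bigl<\mathrm{ch}_2^2,\theta\bigl>$, so a bundle with these two pairings cancelling appropriately defeats the argument. The inclusion--exclusion does not add new functionals (the two families span the same space), and your account of where the second factor of $m$ comes from is off: coefficients of linear combinations never enter curvature norms — by additivity of $\mathrm{ch}$ one simply picks an honest summand that pairs nontrivially — whereas in the correct argument the $m^2$ arises from composing two bundle operations each costing a factor $\leq m$ in curvature.

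The paper's resolution is precisely a second layer of operations. Claim 2 there uses the products $\psi_{k_1}(\bundle{E})\otimes\cdots\otimes\psi_{k_l}(\bundle{E})$ over all multi-indices $\alpha=(k_1,\dots,k_l)$ with $|\alpha|=i$: the resulting quantities $A_i^\alpha$ span the entire weighted-degree-$i$ part of $\Q[\mathrm{ch}_1,\dots,\mathrm{ch}_i]$ ($p(i)$ spanning elements for $p(i)$ unknowns), whereas your tensor powers correspond only to $\alpha=(1,\dots,1)$. Since each $\psi_k(\bundle{E})$ is an integer combination of the honest bundles $\Lambda^\gamma\bundle{E}$ with $|\gamma|=k$, everything is realized by bundles of the form $\Lambda^\alpha(\Lambda^\beta\bundle{E})$ with $|\alpha|,|\beta|\leq m$, which is where $m^2$ comes from; Claim 1 (a Vandermonde argument with Adams operations) separately isolates the graded pieces $\bigl<\mathrm{ch}_i(\cdot),\theta_{2i}\bigl>$. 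That last step also matters for your reduction to pure degree: Proposition \ref{proposition1} does not transfer to the $\mathrm{K}_\mathrm{ch}$--area, because $\mathscr{V}_\mathrm{ch}(M;\theta_{2i})\not\subset\mathscr{V}_\mathrm{ch}(M;\theta)$ in general (the graded pairings can cancel in $\bigl<\mathrm{ch}(\bundle{E}),\theta\bigl>$), so the mixed-degree statement must be handled directly as in the paper.
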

\begin{proof}
The proof follows mainly the idea in \cite{Gr01}, only a couple of changes are necessary to compute the precise estimate. We show that for any bundle $\bundle{E}\in \mathscr{V}(M;\theta )$ there is an associated bundle $\bundle{X}\in \mathscr{V}_\mathrm{ch} (M;\theta )$ in such a way that for all $x,y\in TM$:
\[
| R^\bundle{X}(x\wedge y )| _{op}\leq m^2\cdot | R^\bundle{E}(x\wedge y )| _{op}.
\]
Then the above inequality is an easy consequence. We assume without loss of generality $\theta _0=0$, because otherwise the K--and $\mathrm{K}_\mathrm{ch}$--area are infinite and the statement is trivial. Essential to us will be the proposition that the Adams operation $\psi _k$  applied to a vector bundle is nothing but a polynomial in the exterior powers of this bundle. In fact, $\psi _k(\bundle{E})\in K(M) $ is a $\Z $--linear combination of
\[
\Lambda ^\alpha \bundle{E}=\Lambda ^{\alpha _1}\bundle{E}\otimes \cdots \otimes \Lambda ^{\alpha _l}\bundle{E}
\]
where $\alpha \in \N ^l_0$ is a multi index with $|\alpha |=k$. The $\Z $--coefficient for $\alpha $ in this linear combination is simply determined by the corresponding coefficient in the representation of the $k$th power sum by elementary symmetric polynomials.   

\emph{Claim 1:} Suppose $\bundle{E}$ is a vector bundle with $\left< \mathrm{ch}(\Lambda ^\alpha \bundle{E}),\theta \right> =0$ for all multi indexes $\alpha $ with $1\leq |\alpha |\leq m$, then
\[
\left< \mathrm{ch}_i(\bundle{E}),\theta _{2i}\right> =0
\]
holds for all $i$. Since $\theta _{2i}=0$ for $i>m$, it suffices to prove the case $i\leq m$. This can be seen by using the Adams operation $\psi _k$ for $k\in \{ 1,\ldots ,m\}$. Under the assumption on $\bundle{E}$, $\left< \mathrm{ch}(\psi _k\bundle{E}),\theta \right> $ vanishes for all $k\in \{ 1,\ldots ,m\} $. Since $\mathrm{ch}(\psi _k\bundle{E})=\rho _k(\mathrm{ch}(\bundle{E}))$ with $\rho _k=k^j$ on $\mathrm{H}^{2j}(M,\Q )$, we obtain
\[
0=\left< \mathrm{ch}(\psi _k\bundle{E}),\theta \right> =\sum _{i=1}^mk^i\left< \mathrm{ch}_i(\bundle{E}),\theta _{2i}\right> =\left< a_k,b\right> _{Euc}
\]
for all $k\in \{ 1,\ldots ,m\} $ where
\[
a_k=(k^1,k^2,\ldots ,k^m)\quad \text{and}\quad b=\bigl(\left< \mathrm{ch}_1(\bundle{E}),\theta _2\right> ,\ldots ,\left< \mathrm{ch}_m(\bundle{E}),\theta _{2m}\right> \bigl) .
\]
However, the vectors $a_ 1,\ldots ,a_m$ form a basis of $\Q ^m$ which implies that $b=0$.

\emph{Claim 2:} Let $\bundle{E}$ be a vector bundle and fix some $i\in \{ 1,\ldots ,m\} $. If $\left< \mathrm{ch}_i(\Lambda ^\alpha \bundle{E}),\theta _{2i}\right> $ vanishes for all multi indexes $\alpha =(k_1,\ldots ,k_l)\in \N ^l$ with $|\alpha |=\sum k_j=i$, then for each polynomial $p$: $\left< p(c(\bundle{E})),\theta _{2i}\right> =0$. We use again the Adams operation in order to see this. Define
\[
B_i^\alpha :=\left< \mathrm{ch}_i(\psi _\alpha (\bundle{E})),\theta _{2i}\right> =\left< \mathrm{ch}_i(\psi _{k_1}(\bundle{E})\otimes \cdots \otimes \psi _{k_l}(\bundle{E})),\theta _{2i}\right>  
\]
for multi indexes $\alpha =(k_1,\ldots ,k_l)\in \N ^l$ with $|\alpha |=\sum k_j=i$. Since $\psi _{\alpha }(\bundle{E})$ is a linear combination of $\Lambda ^\beta \bundle{E}$ with $|\beta |\leq i$, $B^\alpha _i$ vanishes for all $\alpha $ under the above assumption on $\bundle{E}$. We will show that $B^\alpha _i=0$ for all $\alpha $ with $|\alpha |=i$ implies
\[
\left< \prod _{j=1}^i\mathrm{ch}_j(\bundle{E})^{\beta _j},\theta _{2i}\right> =0
\]
for all $\beta _1,\ldots ,\beta _i\in \N _0$ with $i=\sum j\cdot \beta _j$. Since the Chern classes of degree $\leq j$ are polynomials in the Chern characters $\mathrm{ch}_1,\ldots ,\mathrm{ch}_j$, this concludes for all polynomials $p$: $\left< p(c(\bundle{E})),\theta _{2i}\right> =0$. Consider a formal power series in $t$: $a(t)=\sum _ja_j\cdot t^j$ with $a_0\neq 0$ and let $\alpha =(k_1,\ldots ,k_i)\in (\N _0)^i$, then
\[
A^\alpha (t):= \prod _{j=1}^i a(k_j\cdot t)
\]
is symmetric in $k_j$ and we obtain an expansion $A^\alpha (t)=\sum A^\alpha _it^i$ where $A_i^\alpha $ does not depend on $t$ and is a polynomial in $a_1,\ldots ,a_i$. A little exercise in linear algebra shows
\[
\Q [a_1,\ldots ,a_i]_{i}=\{ a\in \Q [a_1,\ldots ,a_i]\ |\ \deg a=i\} =\mathrm{span}_\Q  \{ A^\alpha _i\ |\  |\alpha |=i\} 
\]
where $\deg a_1^{j_1}\cdots a_i^{j_i}=\sum \limits_{s=1}^is\cdot j_s$. Note that the inclusion ''$\supseteq $'' follows by definition and ''$\subseteq $'' follows from dimension reasons because if $\{ \alpha _s\in (\N _0)^i|s\in S\}$ is a set of multi indexes such that for all $r\neq s$ there is at least one elementary symmetric polynomial $\sigma _l$ with $\sigma _l(\alpha _r)\neq \sigma _l(\alpha _s)$, then $\{ A^{\alpha _s}_i\ |\ s\in S\} $ is linear independent in $\Q [a_1,\ldots ,a_i]_i$. If we identify $a_j$ with $\mathrm{ch}_j(\bundle{E}) $ for all $j$, the properties of the Adams operation and the Chern character imply that $A^\alpha _i$ is determined by
\[
\mathrm{ch}_i(\psi _{k_1}(\bundle{E})\otimes \cdots \otimes \psi _{k_l}(\bundle{E}))=\mathrm{ch}_i(\psi _\alpha (\bundle{E})).
\]
Thus, $B^\alpha _i=\left< A_i^\alpha ,\theta _{2i}\right> =0$ for all $\alpha $ proves $\left< b,\theta _{2i}\right> =0$ for all $b\in \Q [a_1,\ldots ,a_i]_i$.

\emph{Conclusion:} Suppose $\bundle{E}\in \mathscr{V}(M;\theta )$ and consider the bundle $\bundle{X}:=\Lambda ^\alpha \left( \Lambda ^{\beta }\bundle{E}\right) $ for multi indexes $\alpha $, $\beta $ with $|\alpha |, |\beta |\leq m$, then $\| R^\bundle{X}\| _g\leq m^2\| R^\bundle{E}\| _g$ is satisfied. Moreover, we conclude $\bundle{X}\in \mathscr{V}_\mathrm{ch}(M;\theta )$ for some $\alpha $, $\beta $ by the following contradiction argument. If $\left< \mathrm{ch}(\bundle{X}),\theta \right> $ vanishes for all $\alpha $ and $\beta $, we apply claim 1 to the bundle $\bundle{Y}=\Lambda ^\beta \bundle{E}$ and conclude $\left< \mathrm{ch}_i(\Lambda ^\beta \bundle{E}),\theta _{2i}\right> =0$ for all $i$ and all $\beta $. Hence, claim 2 proves that for any polynomial $p$: $\left< p(c(\bundle{E})),\theta \right> =0$, i.e.~$\bundle{E}\notin \mathscr{V}(M;\theta )$ a contradiction to our assumption. 
\end{proof}

\begin{prop}
\label{proposition7}
Let $f:(\tilde M,\tilde g)\to (M,g)$ be a Riemannian covering with $M$, $\tilde M$ closed and oriented, then
\[
\ke _\mathrm{ch}(\tilde M_{\tilde g};f^!\theta )=\ke _\mathrm{ch}(M_g;\theta )
\]
where $f^!:H_*(M;\Q )\to H_*(\tilde M;\Q )$ is the transfer homomorphism. 
\end{prop}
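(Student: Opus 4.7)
The plan is to reduce to even homology classes, verify ''$\geq$'' by pullback, and verify ''$\leq$'' by the direct image (pushforward) of bundles, with the Chern--Weil identity $\mathrm{ch}\circ f_!=f^!_\mathrm{coh}\circ\mathrm{ch}$ doing the decisive work. Write $\theta=\theta_\mathrm{even}+\theta_\mathrm{odd}$. For the odd part, note that $f\times\mathrm{id}_{S^1}\colon \tilde M\times S^1\to M\times S^1$ is again a Riemannian covering for any product metric, and the transfer is Künneth-compatible:
\[
(f\times\mathrm{id})^!\bigl(\eta\times [S^1]\bigr)=f^!\eta\times[S^1],
\]
since under Poincar\'e duality both sides correspond to $f^*\mathrm{PD}(\eta)\in H^*(\tilde M\times S^1;\Q)$. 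Taking the supremum over the circle factor reduces the odd case to the even one, so from now on I assume $\theta\in H_{2*}(M;\Q)$.

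For ``$\geq$'', I take $\bundle{E}\in\mathscr{V}_\mathrm{ch}(M;\theta)$ and set $\bundle{F}:=f^*\bundle{E}$ with the pulled-back Hermitian connection. Naturality of $\mathrm{ch}$, the adjointness $\langle f^*u,f^!\theta\rangle=\langle u,f_*f^!\theta\rangle$, and $f_*\circ f^!=\deg(f)\cdot\mathrm{id}$ on $H_*(M;\Q)$ give
\[
\langle \mathrm{ch}(\bundle{F}),f^!\theta\rangle =\deg(f)\cdot\langle \mathrm{ch}(\bundle{E}),\theta\rangle\neq 0,
\]
so $\bundle{F}\in\mathscr{V}_\mathrm{ch}(\tilde M;f^!\theta)$. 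Since $f$ is a local isometry, $\|R^\bundle{F}\|_{\tilde g}=\|R^\bundle{E}\|_g$; taking infima and inverting gives the inequality.

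For ``$\leq$'', given $\bundle{F}\in\mathscr{V}_\mathrm{ch}(\tilde M;f^!\theta)$ I form the direct image $\bundle{E}:=f_!\bundle{F}\to M$: over a trivializing open set $U\subset M$ with $f^{-1}(U)=\bigsqcup_{i=1}^d U_i$, put $\bundle{E}|_U:=\bigoplus_{i=1}^d(f|_{U_i})_*(\bundle{F}|_{U_i})$ with the direct-sum Hermitian metric and connection transported via the local diffeomorphisms $f|_{U_i}$. The curvature at $x\in M$ is then block-diagonal with blocks $R^\bundle{F}_y$, $y\in f^{-1}(x)$, acting on $2$-planes of equal $\tilde g$-norm; using $|A\oplus B|_{op}=\max(|A|_{op},|B|_{op})$ yields $\|R^\bundle{E}\|_g=\|R^\bundle{F}\|_{\tilde g}$. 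A Chern--Weil computation with the block-diagonal curvature then shows that the Chern character form on $M$ is the pointwise fibre sum of the Chern character forms of $\bundle{F}$, i.e.~$\mathrm{ch}(\bundle{E})=f^!_\mathrm{coh}\,\mathrm{ch}(\bundle{F})$ (Riemann--Roch for the unramified cover $f$, whose relative Todd class is trivial). Adjointness of the transfer then gives $\langle \mathrm{ch}(\bundle{E}),\theta\rangle=\langle \mathrm{ch}(\bundle{F}),f^!\theta\rangle\neq 0$, so $\bundle{E}\in\mathscr{V}_\mathrm{ch}(M;\theta)$; taking infima gives the desired inequality.

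The substantive step is the ``$\leq$'' direction, and specifically the intertwining $\mathrm{ch}\circ f_!=f^!_\mathrm{coh}\circ\mathrm{ch}$ for the direct image. This is precisely the reason the proposition is stated for the Chern--character K--area rather than the full $\ke$: the K--theoretic pushforward behaves well against the Chern character via the transfer, but has no comparably clean effect on individual Chern monomials, so no analogous argument would promote a bundle from $\mathscr{V}(\tilde M;f^!\theta)$ to one in $\mathscr{V}(M;\theta)$.
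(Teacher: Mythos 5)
Your proof is correct, and its skeleton (pullback for ``$\geq$'', direct image for ``$\leq$'', reduction of odd classes to even ones via $f\times \mathrm{id}$ on $M\times S^1$) coincides with the paper's. The one substantive step --- showing that $\bundle{E}':=f_!\bundle{F}$ lies in $\mathscr{V}_\mathrm{ch}(M;\theta )$ whenever $\bundle{F}\in \mathscr{V}_\mathrm{ch}(\tilde M;f^!\theta )$ --- is where you genuinely diverge. The paper argues upstairs: it first assumes the covering is normal, identifies $f^*\bundle{E}'\cong \bigoplus _hh^*\bundle{F}$ as a sum over deck transformations, and uses that each $h_*$ acts as the identity on the image of $f^!$ to obtain $\deg f\cdot \langle \mathrm{ch}(\bundle{E}'),\theta \rangle =|\deg f|\cdot \langle \mathrm{ch}(\bundle{F}),f^!\theta \rangle $; the non-normal case is then handled by passing to a further finite normal covering $N\to \tilde M\to M$ and applying the normal case twice. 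You instead argue downstairs: the block-diagonal Chern--Weil computation gives $\mathrm{ch}(\bundle{E}')=f_!\,\mathrm{ch}(\bundle{F})$ at the level of forms (sum over the sheets), and the projection formula $\langle f_!\omega ,\theta \rangle =\langle \omega ,f^!\theta \rangle $ finishes it. Your route needs no normality hypothesis and so avoids the paper's two-step reduction entirely; it is the more economical argument, at the mild cost of invoking the de Rham description of the cohomological transfer for finite coverings. The appeal to Riemann--Roch is unnecessary window dressing --- the trace-of-exponential computation you describe already is the proof. Both arguments use the same supporting facts elsewhere ($f_*f^!=\deg f\cdot \mathrm{id}$ with $\deg f\neq 0$ since the covering is finite by compactness, locality of the curvature norm under the local isometry, and K\"unneth compatibility of $f^!$), and your closing remark about why the statement is confined to the Chern character rather than to individual Chern monomials matches the paper's design.
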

\begin{proof}
Observe that $f$ is a finite covering and $0<|\deg f|$ is the number of sheets. Since $f_*f^!\theta =\deg f\cdot \theta $ we conclude ''$\geq $'' from the $\mathrm{K}_\mathrm{ch}$--version of proposition \ref{proposition2} (pull back of vector bundles). Furthermore, each bundle $\bundle{E}\to \tilde M$ induces a bundle $\bundle{E}'\to M$ by
\[
\bundle{E}'_x=\bigoplus _{y\in f^{-1}(x)}\bundle{E}_y.
\]
The connection on $\bundle{E}$ yields a connection on $\bundle{E}'$ with $\| R^{\bundle{E}'}\| _g=\| R^\bundle{E}\| _{\tilde g}$. Thus, we conclude ''$\leq $'' for $0\neq \theta \in H_{2*}(M;\Q )$ if $\bundle{E}\in \mathscr{V}_\mathrm{ch}(\tilde M;f^!\theta )$ implies $\bundle{E}'\in \mathscr{V}_\mathrm{ch}(M;\theta )$ (note that $f^!$ is injective). Assume that $f$ is a normal covering, then $f^*\bundle{E}'$ is isomorphic to $\bigoplus h^*\bundle{E}$ where the sum is taken over all deck transformations $h$. Hence, we obtain
\[
\begin{split}
\deg f\cdot \left< \mathrm{ch}(\bundle{E}'),\theta \right> &=\left< \mathrm{ch}(\bundle{E}'),f_*f^!\theta \right> =\left< \mathrm{ch}(f^*\bundle{E}'),f^!\theta \right> \\
&=\sum _h\left< h^*\mathrm{ch}( \bundle{E})  ,f^!\theta \right> =|\deg f|\cdot \left< \mathrm{ch}(\bundle{E}),f^!\theta \right> 
\end{split}
\]
because for a deck transformation $h:\tilde M\to \tilde M$, $h_*$ acts as $\mathrm{Id}$ on the image of $f^!$: $f=f\circ h$ and $h_*h^!=h^!h_*=\mathrm{Id}$ yield $h_*f^!=h_*(fh)^!=h_*(h^!f^!)=f^!$. This proves the claim if $\theta \in H_{2*}(M;\Q )$ and $f$ is a normal covering. If $f$ is not a normal covering, let $H\subset f_\# \pi _1(\tilde M,\tilde x)$ be a normal subgroup of $\pi _1(M,f(\tilde x))$ such that $\pi _1(M,f(\tilde x))/H$ is a finite group [$H$ always exists because $f_\# \pi _1(\tilde M,\tilde x)$ has finite index in $\pi _1(M,f(\tilde x))$]. Hence, the associated (smooth) covering $l:(N,y)\to (M,f(\tilde x))$ with $l_\# \pi _1(N,y)=H$ is normal and finite, and there is a unique (smooth) map $p:(N,y)\to (\tilde M,\tilde x)$ which is a normal covering and satisfies $l=f\circ p$. Thus, the previous case proves for $\bar g:=l^*g$ and $\theta \in H_{2*}(M;\Q )$:
\[
\ke _\mathrm{ch}(M_g;\theta )=\ke _\mathrm{ch}(N_{\bar g};l^!\theta )=\ke _\mathrm{ch}(N_{\bar g};p^!f^!\theta )=\ke _\mathrm{ch}(\tilde M_{\tilde g};f^!\theta ).
\]
In order to show the proposition for $\theta \in H_{2*+1}(M;\Q )$ consider the covering $b:=f\times \mathrm{id}:\tilde M\times S^1\to M\times S^1$, use $b^!(\theta \times [S^1])=(f^!\theta )\times [S^1]$ and apply the case for even classes to the covering $b$ and the class $\theta \times [S^1]\in H_{2*}(M;\Q )$.
\end{proof}
This proposition allows us to compute the K--areas for the torus respectively for the connected sum of ''nice'' manifolds with a torus. Consider the $2^n$--fold Riemannian covering $f:(T^n,4g)\to (T^n,g),\ \alpha \mapsto 2\alpha $, then by the scaling property
\[
\ke _\mathrm{ch}(T^n_g;\theta )=\ke _\mathrm{ch}(T^n_{4g};f^!\theta )=4\cdot \ke _\mathrm{ch}(T^n_g;\theta )
\]
which shows $ \ke (T^n_g;\theta )=\ke _\mathrm{ch}(T^n_g;\theta )=\infty $ for each $\theta \neq 0$. Here we use that the $\mathrm{K}_\mathrm{ch}$--area of a nontrivial class $\theta \in H_*(M;\Q )$ is in $(0,\infty ]$. This and the following remark prove that $\H _*(T^n\# \cdots \# T^n)=\{ 0\} $ for any finite number of tori. 
\begin{rem}
\label{zusammen}
Suppose that $M^n$ and $N^n$ are closed, oriented and connected. Consider the projection maps $M\# N\to M$ and $M\# N \to N$ by identifying the summand $M$ respectively $N$ to a point, then $\H _k(M\# N)\to \H _k(M)\oplus \H _k(N)$ is well defined and injective, i.e.~$\H _k(M\# N) $ can be considered as a subgroup of $\H _k(M)\oplus \H _k(N)$ for all $k<n$. Moreover, $\H _n(M\# N)=\Z $ implies $\H _n(M)=\Z $ and $\H _n(N)=\Z $. Conversely, if $X\subset M$ or $Y\subset N$ are compact submanifolds of codimension at least one which give a finite upper bound for K--areas on $M$ respectively $N$, then $X,Y\subset M\# N$ determine upper bounds for the corresponding K--areas on $M\# N$. For instance, if $0<j<n$, $\C P^j\subset \C P^n\# T^{2n}$ supplies finite K--area for the homology class induced by the fundamental class of $\C P^j$ which means    
\[
\H _k(\C P^n\# T^{2n})=\biggl\{ \begin{array}{cl}
\Z &if \ \ k\in \{ 2,4,\ldots ,2n-2\}\\
0& otherwise.
\end{array}
\]
\end{rem}
\begin{rem}
Consider a simply connected and closed manifold $N$, then there is some $\epsilon >0$ such that any $\epsilon $--flat bundle on $N$ is already trivial (cf.~Gromov \cite{Gr01}). Hence, the K--area of a class $\theta \in H_{2k}(N)$ is finite if $k>0$ which means $\H _{2k}(N)=H_{2k}(N)$ for all $k>0$. Moreover, the last proposition and the above observations show that $\H _{2k}(M)=H _{2k}(M)$ for all $k>0$ if $M$ is a closed orientable manifold with finite fundamental group. The equality might also hold for odd homology classes, but in this case a more detailed estimate is necessary.  
\end{rem}
\section{Stabalizing K--area and products}
The K--area has a further generalization which depends on taking products with standard tori $T^i=S^1\times \cdots \times S^1$. Suppose that $\theta \in H_{2*-i}(M;\Q )$ for $i\geq 0$, then the $\mathrm{K}_\mathrm{ch}^i$--area of $\theta $ is defined by
\[
\ke _\mathrm{ch}^i(M_g;\theta ):=\ke _\mathrm{ch}(M_g\times T^i_h;\theta \times [T^i])
\]
where the right hand side does not depend on the choice of the metric $h$ by proposition \ref{proposition7}. Note that $\ke ^0_\mathrm{ch}(M_g;\theta )$ is the above $\mathrm{K}_\mathrm{ch}$--area for $\theta \in H_{2*}(M;\Q )$ and  $\ke ^1_\mathrm{ch}(M_g;\theta )$ coincides with the above definition of the $\mathrm{K}_\mathrm{ch}$--area for $\theta \in H_{2*+1}(M;\Q )$. The proposition below shows that $\ke _\mathrm{ch}^i(M;\theta )$ is nondecreasing in $i$ which means $\ke _\mathrm{ch}^{i+2}(M_g;\theta )\geq \ke ^i_\mathrm{ch}(M_g;\theta )$ for all $\theta \in H_{2*-i}(M;\Q )$. The more challenging question is to compute estimates in the opposite direction. We also consider the corresponding stabalized version of this area:
\[
\ke _\mathrm{ch}^\mathrm{st}(M_g;\theta ):=\sup _i\ke _\mathrm{ch}^i(M_g;\theta )\geq \ke _\mathrm{ch}(M_g;\theta ).
\]
We could introduce similar objects for the ordinary K--area, however, it is uncertain if a stabalized version of the K--area can be estimated by the stabalized $\mathrm{K} _\mathrm{ch}$--area (compare the constant in proposition \ref{proposition6}). Since the stabalized $\mathrm{K}_\mathrm{ch}$--area inherits most of the properties of the $\mathrm{K}_\mathrm{ch}$--area, the set $\H _k^\mathrm{st}(M;\Q )\subset H _k(M;\Q )$ consisting of rational homology classes with finite stabalized $\mathrm{K}_\mathrm{ch}$--area is a linear subspace of $\H _k(M;\Q )$ for all $k$. Moreover, a continuos map $f:M\to N$ yields homomorphisms $f_*:\H _k^\mathrm{st}(M;\Q )\to \H _k^\mathrm{st}(N;\Q )$. Hence, the stabalized K--area homology $\H ^\mathrm{st}_k(M;\Q )$ depends only on the homotopy type of $M$. There are plenty of manifolds $M$ with $\H _*(M;\Q )=\H _*^\mathrm{st}(M;\Q )$, however it seems a rather difficult question if this is true for general $M$. The reason for introducing the stabalized version is the more consistend behaviour in the product case:  
\begin{prop}
Suppose that $\eta \in H_k(M;\Q )$ and $\theta \in H_l(N;\Q )$, then
\[
\begin{split}
\ke _\mathrm{ch}^{i+j}(M_g\times N_h;\eta \times \theta )&\geq \min \left\{ \ke _\mathrm{ch}^i(M_g;\eta ),\ke _\mathrm{ch}^j(N_h; \theta )\right\}\\
\ke _\mathrm{ch}^{\mathrm{st}}(M_g\times N_h;\eta \times \theta )&\geq \min \left\{ \ke _\mathrm{ch}^\mathrm{st}(M_g;\eta ),\ke _\mathrm{ch}^\mathrm{st}(N_h; \theta )\right\}
\end{split}
\]
where $i+k$ and $j+l$ are assumed to be even integers. 
\end{prop}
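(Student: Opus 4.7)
The plan is, for any $\bundle{E}\in \mathscr{V}_\mathrm{ch}(M_g\times T^i;\eta \times [T^i])$ and $\bundle{G}\in \mathscr{V}_\mathrm{ch}(N_h\times T^j;\theta \times [T^j])$, to build a single bundle on $(M\times T^i)\times (N\times T^j)$, identified after reordering with $(M\times N)\times T^{i+j}$, which detects $(\eta \times \theta )\times [T^{i+j}]$ via the Chern character and has curvature norm bounded by $\max \{\| R^\bundle{E}\|,\| R^\bundle{G}\|\}$. Taking infima over $\bundle{E}$ and $\bundle{G}$ then gives the first inequality, and the stabilized version follows from $\ke _\mathrm{ch}^\mathrm{st}(M\times N;\eta \times \theta )\geq \sup _{i,j}\ke _\mathrm{ch}^{i+j}(M\times N;\eta \times \theta )$ together with $\sup _{i,j}\min \{a_i,b_j\}=\min \{\sup _ia_i,\sup _jb_j\}$ for monotone families. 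The obvious candidate is the external tensor product $\bundle{F}:=\pi _1^*\bundle{E}\otimes \pi _2^*\bundle{G}$ with the induced tensor product connection.

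First I would verify $\bundle{F}\in \mathscr{V}_\mathrm{ch}$. Multiplicativity of $\mathrm{ch}$ under external tensor products and the Künneth formula for pairing cross products give
\[
\left< \mathrm{ch}(\bundle{F}),(\eta \times [T^i])\times (\theta \times [T^j])\right> =\pm \left< \mathrm{ch}(\bundle{E}),\eta \times [T^i]\right> \cdot \left< \mathrm{ch}(\bundle{G}),\theta \times [T^j]\right> ,
\]
which is nonzero by hypothesis. Under the canonical reordering diffeomorphism $(M\times T^i)\times (N\times T^j)\to (M\times N)\times T^{i+j}$ this class coincides up to sign with $(\eta \times \theta )\times [T^{i+j}]$ via $[T^i]\times [T^j]=[T^{i+j}]$, and the parity hypothesis $i+k,j+l\in 2\Z $ keeps every class inside the even degrees where $\mathscr{V}_\mathrm{ch}$ is defined.

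The analytic heart is the curvature bound. One has $R^\bundle{F}=\pi _1^*R^\bundle{E}\otimes \id +\id \otimes \pi _2^*R^\bundle{G}$, so for tangent vectors split as $v=v_1+v_2$, $w=w_1+w_2$ along the product decomposition,
\[
|R^\bundle{F}(v\wedge w)|_{op}\leq \| R^\bundle{E}\| \cdot |v_1\wedge w_1|+\| R^\bundle{G}\| \cdot |v_2\wedge w_2|.
\]
The main obstacle, and the one place the argument needs real computation rather than formal nonsense, is the pointwise bivector estimate $|v_1\wedge w_1|+|v_2\wedge w_2|\leq |v\wedge w|$ in $\Lambda ^2(V_1\oplus V_2)$; a naive Cauchy--Schwarz would only give a factor $\sqrt 2$, spoiling the clean statement. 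My approach would be to use the orthogonal decomposition $\Lambda ^2(V_1\oplus V_2)=\Lambda ^2V_1\oplus (V_1\otimes V_2)\oplus \Lambda ^2V_2$, identify the mixed component of $v\wedge w$ with $v_1\otimes w_2-w_1\otimes v_2\in V_1\otimes V_2$, and reduce the squared inequality, after the substitutions $s=\left< v_1,w_1\right> /(|v_1||w_1|)$ and $t=\left< v_2,w_2\right> /(|v_2||w_2|)$, to the elementary trigonometric inequality $\sqrt{(1-s^2)(1-t^2)}\leq 1-st$ for $|s|,|t|\leq 1$, which rearranges to $(s-t)^2\geq 0$. Granted this, $\| R^\bundle{F}\| \leq \max \{\| R^\bundle{E}\|,\| R^\bundle{G}\|\}$, and both assertions of the proposition follow.
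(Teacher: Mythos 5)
Your proposal is correct and follows essentially the same route as the paper: form the external tensor product $\pi_1^*\bundle{E}\otimes\pi_2^*\bundle{G}$, use multiplicativity of the Chern character to see it detects $\tilde\eta\times\tilde\theta$, bound its curvature by the maximum of the two curvatures, and transpose coordinates to land on $(M\times N)\times T^{i+j}$. The only difference is that you supply a proof of the bivector estimate behind $\| R^{\bundle{F}}\| \leq \max\{\| R^{\bundle{E}}\| ,\| R^{\bundle{G}}\| \}$, which the paper simply asserts; your reduction to $\sqrt{(1-s^2)(1-t^2)}\leq 1-st$ is a valid way to close that step.
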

\begin{proof}
The inequality is obvious if one of the homology classes vanishes, hence $\eta \neq 0$ and $\theta \neq 0$. We set $\tilde \eta :=\eta \times [T^i]$ and $\tilde \theta :=\theta \times [T^j]$ for notational simplicity. Suppose that  $\bundle{E}\in \mathscr{V}_\mathrm{ch}(M\times T^i;\tilde \eta )$ and $\bundle{F}\in \mathscr{V}_\mathrm{ch}(N\times T^j;\tilde \theta )$, then
\[
0\neq \bigl< \mathrm{ch}(\bundle{E}),\tilde \eta \bigl> \cdot \bigl< \mathrm{ch}(\bundle{F}),\tilde \theta \bigl> =\bigl< \mathrm{ch}(\bundle{E})\times \mathrm{ch}(\bundle{F}),\tilde \eta \times \tilde \theta \bigl>  =\bigl< \mathrm{ch}(\pi _M^*\bundle{E}\otimes \pi _N^*\bundle{F}),\tilde \eta \times \tilde \theta \bigl>  
\]
shows $\pi _M^*\bundle{E}\otimes \pi _N^*\bundle{F}\in \mathscr{V}_\mathrm{ch}(M\times T^i\times N\times T^j;\tilde \eta \times \tilde \theta )$. Moreover, we obtain for the tensor product connection on $\bundle{G}:=\pi _M^*\bundle{E}\otimes \pi _N^*\bundle{F}$:
\[
\| R^\bundle{G}\| _{g\oplus h}=\max \left\{ \| R^\bundle{E}\| _g,\| R^\bundle{F}\| _h\right\} .
\]
Hence, considering the pull back of $\bundle{G}$ by the coordinate transposition map $M\times N\times T^{i+j}\to M\times T^i\times N\times T^j$ proves the inequality.
\end{proof}
This proposition yields additional examples for finite K--area. For instance, if $M^m$ and $N^n$ are connected, closed and spin and $M\times N$ admits a metric of positive scalar curvature, then $\H _m(M)=\Z $ or $\H _n(N)=\Z $. Note that in this case neither $M$ nor $N$ need to carry a metric of positive scalar curvature. In general it seems rather difficult to see whether $\theta \times \eta $ has finite K--area even if both $\theta $ and $\eta $ have finite K--area. We observe equality in the previous proposition for $\eta \in H_0(M;\Q )$ or $\theta \in H_0(N;\Q )$ by considering the inclusion maps  (cf.~proposition \ref{proposition2}). Moreover, the following proposition proves finite K--area for homology classes $[M]\times \theta $ if $M$ is a closed spin manifold of positive scalar curvature. 
\begin{prop}
Let $(M^n,g)$ be a connected closed spin manifold of positive scalar curvature and $\theta \in H_l(N;\Q )$ where $N$ is a compact manifold, then 
\[
\ke _\mathrm{ch}^\mathrm{st}\bigl( M_g\times N_h;\bigl(\widehat{\mathbf{A}}(M)\cap [M]\bigl)\times \theta \bigl)\leq \frac{2n(n-(-1)^{n})}{\min \mathrm{scal}_g}.
\]
\end{prop}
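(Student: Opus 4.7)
The plan is to apply the Atiyah--Singer family index theorem combined with the Lichnerowicz estimate, extending Gromov's bound for the total K--area of closed spin manifolds of positive scalar curvature to the present product setting. The key idea: bundles $\bundle{E}$ on $M\times N\times T^i$ with $\langle \mathrm{ch}(\bundle{E}),(\widehat{\mathbf{A}}(M)\cap [M])\times \theta \times [T^i]\rangle \neq 0$ force the family of twisted Dirac operators on $M$ parametrized by $P:=N\times T^i$ to have nonvanishing family index, while positive scalar curvature together with the Lichnerowicz formula forbids this unless $\|R^\bundle{E}\|$ is sufficiently large.

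First I would reduce to the case $n$ even. For odd $n$ I absorb one of the stabilizing circles into $M$: using $\widehat{\mathbf{A}}(S^1)=1$ and $[T^i]=[S^1]\times [T^{i-1}]$ one obtains (up to permutation of factors)
\[
(\widehat{\mathbf{A}}(M)\cap [M])\times \theta \times [T^i]=(\widehat{\mathbf{A}}(M\times S^1)\cap [M\times S^1])\times \theta \times [T^{i-1}],
\]
where $M\times S^1$ with a flat circle metric is a closed even--dimensional spin manifold of positive scalar curvature satisfying $\min \mathrm{scal}=\min \mathrm{scal}_g$. The even case applied to $M\times S^1$ of dimension $n+1$ produces the constant $2(n+1)n/\min \mathrm{scal}_g$, which matches $2n(n-(-1)^n)/\min \mathrm{scal}_g$ when $n$ is odd. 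Since $\ke _\mathrm{ch}^\mathrm{st}=\sup _i\ke _\mathrm{ch}^i$, bounding $\ke _\mathrm{ch}^i$ for all sufficiently large $i$ suffices.

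For even $n$, fix $i$ and let $\bundle{E}\to M\times N\times T^i$ be a Hermitian bundle with Hermitian connection satisfying $\langle \mathrm{ch}(\bundle{E}),\Theta \rangle \neq 0$ where $\Theta :=(\widehat{\mathbf{A}}(M)\cap [M])\times \theta \times [T^i]$; if no such $\bundle{E}$ exists the $\mathrm{K}_\mathrm{ch}$--area vanishes and the bound is trivial. The cohomological family index theorem gives
\[
\mathrm{ch}(\mathrm{ind}(\dirac^\bundle{E}))=\pi _*(\widehat{\mathbf{A}}(TM)\cdot \mathrm{ch}(\bundle{E}))\in H^*(P;\Q )
\]
for the projection $\pi :M\times P\to P$, and the projection formula yields
\[
\langle \mathrm{ch}(\mathrm{ind}(\dirac^\bundle{E})),\theta \times [T^i]\rangle =\langle \mathrm{ch}(\bundle{E}),\Theta \rangle \neq 0,
\]
so $\mathrm{ind}(\dirac^\bundle{E})\in K(P)\otimes \Q $ is nonzero, and hence there exists $p\in P$ at which the fiber operator $\dirac^{\bundle{E}_p}$ with $\bundle{E}_p:=\bundle{E}|_{M\times \{ p\} }$ fails to be invertible. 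The Lichnerowicz formula on this fiber reads $(\dirac^{\bundle{E}_p})^2=\nabla ^*\nabla +\mathrm{scal}_g/4+\mathcal{R}^{\bundle{E}_p}$ with the standard Clifford estimate $|\mathcal{R}^{\bundle{E}_p}|_{op}\leq \tfrac{n(n-1)}{2}\|R^\bundle{E}\|$ (the restriction of $R^\bundle{E}$ to $\Lambda ^2TM$ is bounded by the full norm because an orthonormal frame of $TM$ remains orthonormal in the product metric). Noninvertibility therefore forces $\|R^\bundle{E}\|\geq \min \mathrm{scal}_g/(2n(n-1))$; taking infimum over admissible $\bundle{E}$, inverting, and passing to the supremum in $i$ yields the claimed bound.

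The main obstacle I anticipate is a careful invocation of the cohomological family index formula and its pairing via fiber integration/projection formula in rational (co)homology on the compact smooth parameter space $P$, in a form applicable to the mixed--degree class $\widehat{\mathbf{A}}(M)\cap [M]$. Given that, the fiberwise Lichnerowicz vanishing and the Clifford action constant $n(n-1)/2$ are standard.
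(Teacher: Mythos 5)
Your argument is correct and takes essentially the same route as the paper: a bundle with nonzero pairing against $(\widehat{\mathbf{A}}(M)\cap [M])\times \theta \times [T^i]$ forces, via the family index theorem over the parameter space, some fibre Dirac operator on $M$ to be non-invertible, and the Lichnerowicz formula with the Clifford constant $n(n-1)/2$ then gives $\| R^{\bundle{E}}\| \geq \min \mathrm{scal}_g/(2n(n-1))$. The only cosmetic difference is in the reduction to even $n$ (the paper multiplies $M$ by a fresh $S^1$ and handles the stabilization by replacing $(N,\theta )$ with $(N\times T^i,\theta \times [T^i])$, whereas you absorb one of the stabilizing circles into $M$ and work directly over $N\times T^i$); both are fine.
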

\begin{proof}
It is sufficient to prove the proposition for the $\mathrm{K}_\mathrm{ch}$--area, the stabalized version follows from this case by replacing $(N,\theta )$ with $(N\times T^i,\theta \times [T^i])$. The inequality is obvious for $\theta =0$, hence suppose $\theta \neq 0$. We start with the case that $n$ is even. Without loss of generality $l$ is even, otherwise go over to $N\times S^1$ and $\theta \times [S^1]$. A bundle $\bundle{E}\in \mathscr{V}_\mathrm{ch}\bigl( M\times N;\bigl(\widehat{\mathbf{A}}(M)\cap [M]\bigl)\times \theta \bigl) $ yields a family of smooth vector bundles over $M$ parametrized by $N$. Thus, twisting the (constant family of) complex spinor bundle $\spinor M$ with the family of bundles $\bundle{E}$ yields a family of Dirac operators $\dirac ^+:\Gamma (\spinor ^+M\otimes \bundle{E})\to \Gamma (\spinor ^-M\otimes \bundle{E})$ over $M$ parametrized by $N$ where the corresponding index satisfies
\[
\left< \mathrm{ind}\ \dirac ^+,\theta \right> =\left< \mathrm{ch}(\bundle{E}),\left(\widehat{\mathbf{A}}(M)\cap [M]\right) \times \theta \right> \neq 0
\]
Hence, there is a point $x\in N$ such that the associated twisted Dirac operator $\dirac _x:\Gamma (\spinor M\otimes \bundle{E}_x)\to \Gamma (\spinor M\otimes \bundle{E}_x)$ has nontrivial kernel where $\bundle{E}_x$ is the induced bundle over $M\times \{ x\}\subset M\times N$. The integrated version of the Bochner Lichnerowicz formula
\[
\dirac ^2_x=\nabla ^*\nabla +\frac{\mathrm{scal}_g}{4}+\frak{R}^{\bundle{E}_x}
\]
shows that there is a point on $M$ where the minimal eigenvalue of $\frak{R}^{\bundle{E}_x}$ is less or equal to $ -\mathrm{scal}_g/4$. However, the maximal absolute eigenvalue of $\frak{R}^{\bundle{E}_x}$ is bounded by $\frac{n(n-1)}{2}\| R^{\bundle{E}_x}\| $ which implies
\[
\| R^\bundle{E}\| \geq \| R^{\bundle{E}_x}\| \geq \frac{\min  \mathrm{scal}_g}{2n(n-1)}
\]
for all $\bundle{E}\in \mathscr{V}_\mathrm{ch}\bigl( M\times N;\bigl( \widehat{\mathbf{A}}(M)\cap [M]\bigl) \times \theta \bigl) $. If $M$ is odd dimensional, apply the even dimensional case to the manifold $M_g\times S^1_{\mathrm{d}t^2}$.
\end{proof}
\begin{rem}
Equality in the above situation can only hold if $g$ has constant scalar curvature. As an example, we obtain equality for even dimensional spheres of constant curvature. In fact, if $S^{2n}_0$ denotes the standard sphere of constant sectional curvature one, then the complex spinor bundle of $S^{2n}$ satisfies $\| R^{{\scriptsize \spinor }^+}\| =\frac{1}{2}$ and $c_n(\spinor ^+)\neq 0$ which proves $\ke (S^{2n}_0)=\ke _\mathrm{ch}(S^{2n}_0)=\ke _\mathrm{ch}^\mathrm{st}(S^{2n}_0)=2$. 
\end{rem}
\begin{rem}
\label{rem8}If $M$ is a compact manifold, the above proposition shows that $\theta \times [S^n]$ has finite K--area for all $\theta \in H_k(M)$ and $n\geq 2$. Hence, we obtain from singular theory that
\[
\H _{k}(M)\oplus H _{k-n}(M)\to \H _{k}(M\times S^n), \ (\eta ,\theta )\mapsto \eta \times \mathbbm{1}+\theta \times [S^n]
\]
is an isomorphism for all $k$ and $n\geq 2$. The same statement holds of course for the stabalized K--area homology. 
\end{rem}
\begin{prop}
Suppose that $\theta \in H_k(M;\Q )$, $\alpha \in H^1(M;\Q )$ and $i+k+1\in 2\Z $, then:
\[
\ke _\mathrm{ch}^i(M_g;\alpha \cap \theta )\leq \ke _\mathrm{ch}^{i+1}(M_g;\theta ).
\]
\end{prop}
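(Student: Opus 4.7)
The plan is to construct, for any bundle $\bundle{E}$ approximately realizing the $\mathrm{K}_\mathrm{ch}$-area of $(\alpha\cap\theta)\times[T^i]$, an associated bundle $\bundle{F}\in\mathscr{V}_\mathrm{ch}(M\times T^{i+1};\theta\times[T^{i+1}])$ whose curvature operator norm exceeds $\|R^\bundle{E}\|$ by an arbitrarily small amount. The central ingredient is a Hermitian line bundle on $M\times S^1$ that encodes $\alpha$ cohomologically but admits a connection of arbitrarily small curvature.

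First, by scaling invariance in the homology class we may assume $\alpha\in H^1(M;\Z)$, and Hopf's theorem together with smooth approximation (as already exploited in the proof that $\H_1(M;\Q)=0$) produces a smooth $f:M\to S^1$ with $f^*\omega=\alpha$, where $\omega$ generates $H^1(S^1;\Z)$. Writing $T^{i+1}=T^i\times S^1$, give the last $S^1$ circumference $R$; by Proposition \ref{proposition7} the value $\ke_\mathrm{ch}^{i+1}(M_g;\theta)$ is independent of this choice. Take a Hermitian line bundle $L$ on $M\times T^{i+1}$ with $c_1(L)=\alpha\times 1\times\omega$ (for instance the pullback of the Poincar\'e bundle along $(f,\mathrm{pr}_{S^1}):M\times T^{i+1}\to T^2$), and equip it with the connection whose curvature is $2\pi i$ times the product of harmonic representatives. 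Since the harmonic representative of $\omega$ on a circle of length $R$ has pointwise norm $1/R$, this yields $\|R^L\|\leq C/R$ with $C$ depending only on $(M,g)$ and $\alpha$.

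Next, for $\bundle{E}\in\mathscr{V}_\mathrm{ch}(M\times T^i;(\alpha\cap\theta)\times[T^i])$ set $\bundle{F}:=\pi^*\bundle{E}\otimes L$ with tensor product connection, where $\pi:M\times T^{i+1}\to M\times T^i$ is the projection. Since $\omega^2=0$ forces $c_1(L)^2=0$, the Chern character factors as
\[
\mathrm{ch}(\bundle{F})=\pi^*\mathrm{ch}(\bundle{E})+\pi^*\mathrm{ch}(\bundle{E})\cup(\alpha\times 1\times\omega).
\]
Pairing with $\theta\times[T^i]\times[S^1]$ kills the first summand (no $H^1(S^1)$-component to pair with $[S^1]$), and by the cup-cap adjointness together with $\langle\omega,[S^1]\rangle=1$ the second summand reduces to $\pm\langle\mathrm{ch}(\bundle{E}),(\alpha\cap\theta)\times[T^i]\rangle\neq 0$. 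Hence $\bundle{F}\in\mathscr{V}_\mathrm{ch}(M\times T^{i+1};\theta\times[T^{i+1}])$.

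Finally, in the product metric $\|R^{\pi^*\bundle{E}}\|=\|R^\bundle{E}\|$, so the tensor-product curvature bound gives $\|R^\bundle{F}\|\leq\|R^\bundle{E}\|+C/R$. Letting $R\to\infty$ and then taking the infimum over $\bundle{E}$ yields $\inf_\bundle{F}\|R^\bundle{F}\|\leq\inf_\bundle{E}\|R^\bundle{E}\|$, which inverts to the stated inequality. The main obstacle is ensuring $\|R^L\|$ is small \emph{uniformly} in $\bundle{E}$ without degrading $\|R^{\pi^*\bundle{E}}\|$; this is exactly what the $T^{i+1}$-metric independence of the $\mathrm{K}_\mathrm{ch}^{i+1}$-area (Proposition \ref{proposition7}) affords, by letting us inflate the extra $S^1$ freely.
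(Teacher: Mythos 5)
Your argument is correct and follows essentially the same route as the paper: tensor $\pi^*\bundle{E}$ with the pullback of a bundle on $T^2$ whose first Chern class is a nonzero multiple of $\omega\times\omega$, so that cup--cap adjointness converts the pairing against $\theta\times[T^{i+1}]$ into the pairing against $(\alpha\cap\theta)\times[T^i]$. The only (harmless) difference is how the auxiliary bundle is made nearly flat: you inflate the extra circle and use a line bundle with harmonic curvature of size $O(1/R)$, invoking the metric-independence from Proposition \ref{proposition7}, whereas the paper keeps the metric fixed and uses that $T^2$ admits $\epsilon$--flat bundles (of higher rank) detecting $[T^2]$; both devices work, and your explicit reduction to integral $\alpha$ by scaling is a point the paper leaves implicit.
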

\begin{proof}
Consider a bundle $\bundle{E}\in \mathscr{V}_\mathrm{ch}(M\times T^i;(\alpha \cap \theta )\times [T^i])$ and a smooth map $f:M\to S^1$ with $f^*([S^1]^*) =\alpha $ for the orientation class $[S^1]^* \in H^1(S^1;\Q )$. Denote by $\pi :M\times T^{i}\times S^1\to M\times T^i$ the projection and by $\sigma :M\times T^i\times S^1\to S^1\times S^1, (x,y,t)\mapsto (f(x),t)$. For any $\epsilon >0$ there is a bundle $\bundle{F}\in \mathscr{V}_\mathrm{ch}(S^1\times S^1;[S^1]\times [S^1])$ with $\| R^\bundle{F}\| \leq \epsilon $. Hence, $\bundle{G}=\pi ^*\bundle{E}\otimes \sigma ^*\bundle{F}$ satisfies $\| R^\bundle{G}\| \leq \| R^\bundle{E}\| +\epsilon $ and since $\mathrm{ch}(\bundle{F})=\mathrm{rk}(\bundle{F})+x\cdot ([S^1]^* \times [S^1]^*)$ for some $x\in \Q \setminus \{ 0\} $ we conclude
\[
\begin{split}
\left< \mathrm{ch}(\bundle{G}),\theta \times [T^{i+1}]\right> &=x\cdot \left< \pi ^*\mathrm{ch}(\bundle{E})\cdot \pi ^*(\alpha \times \mathbbm{1})\cdot \sigma ^*(\mathbbm{1}\times [S^1]^*),\theta \times [T^{i}]\times [S^1]\right> \\
&=x\cdot \left< \mathrm{ch}(\bundle{E}),(\alpha \cap \theta )\times [T^i]\right> \neq 0 
\end{split}
\]
which proves the inequality.
\end{proof}
We obtain as a corollary that capping with cohomology classes of degree one yields maps:
\[
\begin{split}
&H^1(M;\Z )\times \H _{2k+1}(M)\to \H _{2k}(M),\ (\alpha ,\theta )\mapsto \alpha \cap \theta \\
&H^1(M;\Q )\times \H_k^\mathrm{st}(M;\Q )\to \H _{k-1}^\mathrm{st}(M;\Q ),\ (\alpha ,\theta )\mapsto \alpha \cap \theta .
\end{split}
\]
In particular, a connected closed manifold $M^n$  with $\H ^\mathrm{st}_n(M;\Q )=\Q $ satisfies $\H ^\mathrm{st}_{n-1}(M;\Q )=H_{n-1}(M;\Q )$ and therefore $\H _{n-1}(M)=H_{n-1}(M)$. This may be seen as the K--area analog of Schoen and Yau's result about positive scalar curvature on minimal hypersurfaces (cf.~\cite{SchY6}). The corresponding statement for manifolds of positive scalar curvature was used by Schick in \cite{Schick1} to give a counterexample to the unstable Gromov--Lawson--Rosenberg conjecture.
\section{K--area on noncompact manifolds}
A noncompact manifold has no fundamental class and in view of the relative index theorem we are rather interested on the K--area defined with bundles of compact support. Hence, in order to get a theory of finite K--area on noncompact manifolds we can consider subgroups of the Borel--Moore homology or subgroups of singular cohomology. Moreover, in order to simplify statements about positive scalar curvature and the $\widehat{A}$--class we choose the cohomology approach. In this section all manifolds are assumed to be connected, oriented and without boundary. Remember that singular homology is dual isomorphic to cohomology with compact support and $H^k(M;\Q )$ is the dual space of $H_k(M;\Q )$. Thus, the bilinear pairing
\[
H^{m-k}(M;\Q )\times H^{k}_\mathrm{cpt}(M;\Q )\to \Q \ ,(\alpha ,\beta )\mapsto \int  _M\alpha \cup \beta 
\]
is well defined and nondegenerate where $m=\dim M$. Moreover, if $f:M^m\to N^n$ is a proper map, the pull back $f^*:H_\mathrm{cpt}^k(N;\Q )\to H_\mathrm{cpt}^k(M;\Q )$ induces the transfer homomorphism
\[
f_!:H^{m-k}(M;\Q )\to H^{n-k}(N;\Q ),\quad \int _N (f_!\alpha )\cup \gamma :=\int _M\alpha \cup f^*\gamma .
\]
If $\alpha \in H^{m-2k}(M^m;\Q )$ is a cohomology class, $\mathscr{V} _{\mathrm{cpt}}(M;\alpha )$ denotes the set of Hermitian vector bundles $\bundle{E}\to M$ endowed with a Hermitian connection such that $\bundle{E}$ and its connection are trivial outside of a compact set and such that there are nonnegative integers $i_1,\ldots ,i_k$ with $\sum j\cdot i_j=k$ and
\[
0\neq \int _M\alpha \cdot c_1(\bundle{E})^{i_1}\cdots c_k(\bundle{E})^{i_k} .
\]
Note that the right hand side is well defined because $c_i(\bundle{E})$ has compact support. The K--area $\ke (M_g;\alpha )$ of the Riemannian manifold $(M,g)$ w.r.t.~the cohomology class $\alpha $ is defined like in (\ref{defn_k}) by taking the infimum over bundles in $\mathscr{V} _\mathrm{cpt}(M;\alpha )$. We set $\ke  (M_g;\alpha )=0$ in case of $\mathscr{V} _\mathrm{cpt}(M;\alpha )=\emptyset $. If $k$ is odd and $\alpha \in H^{m-k}(M^m;\Q )$, define
\[
\ke (M_g;\alpha ):=\sup _{\mathrm{d}t^2}\ke (M_g\times S^1_{\mathrm{d}t^2};\pi ^*\alpha )
\]     
for the projection map $\pi :M\times S^1\to M$. In accordance with proposition \ref{proposition1} the K--area of a general cohomology class $\alpha =\sum \alpha _i$ is the maximum of the K--areas of the $\alpha _i\in H^i(M;\Q )$. If $f:(M^m,g)\to (N^n,h)$ is a smooth proper Lipschitz map in the sense that $g\geq f^*h$ on $TM$, then
\[
\ke (M_g;\alpha )\geq \ke (N_h;f_!\alpha )
\]
for all $\alpha \in H^{*}(M;\Q )$. If $\alpha \in H^{m-2*}(M;\Q )$, this inequality is still true for $\Lambda ^2$--Lipschitz maps $f$ (i.e.~$g\geq f^*h$ on $\Lambda ^2TM$). However, it fails for general $\alpha $ because the condition $\Lambda ^2$--Lipschitz does not extend to the map $f\times \mathrm{id}:M\times S^1\to N\times S^1$ if $M$ is not compact (cf.~proof of proposition \ref{proposition2}). %It suffices to show the inequality for $\alpha \in H^{m-k}(M;\Q )$. If $k$ is even, the pull back of bundles by $f$ yields a map $f^*:\mathscr{V}_\mathrm{cpt}(N;f_!\alpha )\to \mathscr{V}_\mathrm{cpt}(M;\alpha )$, hence $\| R^{f^*\bundle{E}}\| _g\leq \| R^\bundle{E}\| _h$ shows the claim. If $k$ is odd, consider the cohomology class $\pi ^*\alpha $ on $M\times S^1$ and use the same argument on the map $f\times \mathrm{id}:M\times S^1\to N\times S^1$.
Because proposition \ref{proposition1} holds for the cohomology K--area,
\[
\H ^j(M_g;\Q ):=\{ \alpha \in H^j(M;\Q )\ |\ \ke (M_g;\alpha )<\infty \}
\] 
is a linear subspace for each $j$ which of course depends on the asymptotic geometry of the metric $g$ on noncompact manifolds except for $j=m=\dim M$ since flat connections on trivial bundles mean $\H ^m(M;\Q )=\{ 0\}$. The subscript on $M$ refers to the choice of a geometry at infinity. However, if $g\sim h$ in the sense that there is a constant $C>0$ with $C^{-1}\cdot g\leq h\leq C\cdot g$ on $TM$, then $\H ^j(M_g;\Q )=\H ^j(M_h;\Q )$. In fact, on compact manifolds the subspaces $\H ^j(M;\Q )$ are independent on the choice of the metric. Moreover, if $f:(M^m,g)\to (N^n,h)$ is a smooth proper Lipschitz map, the transfer homomorphisms restrict to  homomorphisms on the respective cohomology groups with finite K--area:
\[
f_!:\H ^{m-k}(M_g;\Q )\to \H ^{n-k}(N_h;\Q ).
\]
If $M^m$ is a closed oriented manifold, the above definitions coincide up to Poincare duality, i.e.
\[
\ke (M_g;\alpha )=\ke (M_g;\alpha \cap [M])
\]
where $[M]\in H_{m}(M)$ is the fundamental class. In particular, Poincare duality yields isomorphisms
\[
\cap [M]:\H ^j(M;\Q )\to \H _{m-j}(M;\Q ).
\]
At this point we should clarify that in general $\H ^j(M;\Q )$ is no longer the dual space of $\H _j(M;\Q )$. Although, the cohomology approach has its advantages for noncompact manifolds and statements about scalar curvature, the theory of finite K--area is a homology theory, because the transfer homomorphisms are not natural. 
\begin{prop}
If $M$ is a compactly $\widehat{A}$--enlargeable manifold, then the total $\widehat{A}$--class of $M$ has infinite K--area: $\widehat{\mathbf{A}}(M)\notin \H ^*(M;\Q )$. In particular, a compactly enlargeable manifold $M^m$ satisfies
\[
\H ^0(M;\Q )=\H _m(M;\Q )=\{ 0\} .
\]
Moreover, if $M$ is weakly enlargeable, then for each metric $g$ on $M$ and every constant $C>0$ there is a Riemannian covering $\tilde{M}_{\tilde g}$ with $\ke (\tilde M_{\tilde g};\widehat{\mathbf{A}}(\tilde{M}))>C$. 
\end{prop}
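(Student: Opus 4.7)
The plan is to suppose $\widehat{\mathbf{A}}(M)\in\H^*(M;\Q)$ and derive a contradiction by pulling back low-curvature bundles from spheres via $\epsilon$-contracting maps on finite coverings. Poincar\'e duality on the closed oriented manifold $M$ together with the inequality $\ke_\mathrm{ch}\le\ke$ would yield a uniform bound
\[ \ke_\mathrm{ch}\bigl(M_g;\widehat{\mathbf{A}}(M)\cap[M]\bigr)\le K<\infty. \]
Naturality of the $\widehat A$-class, $p^*\widehat{\mathbf{A}}(M)=\widehat{\mathbf{A}}(\tilde M)$, combined with the projection formula $p^!(\alpha\cap[M])=p^*\alpha\cap p^![M]$ and $p^![M]=[\tilde M]$, shows that $p^!$ sends $\widehat{\mathbf{A}}(M)\cap[M]$ to $\widehat{\mathbf{A}}(\tilde M)\cap[\tilde M]$, so by Proposition~\ref{proposition7} the same bound $K$ applies to every finite Riemannian cover $\tilde M_{\tilde g}\to M_g$.

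For each $\epsilon>0$ I would then invoke compact $\widehat A$-enlargeability to choose such a finite cover $\tilde M_\epsilon\to M$ together with a smooth $\epsilon$-contracting map $f_\epsilon\colon\tilde M_\epsilon\to S^{2k}$ of nonzero $\widehat A$-degree. Fix once and for all on $S^{2k}$ a Hermitian bundle $\bundle F$ with Hermitian connection whose top Chern character component $\mathrm{ch}_k(\bundle F)$ generates $H^{2k}(S^{2k};\Q)$; such $\bundle F$ exists because $\widetilde K(S^{2k})\otimes\Q\cong\Q$ is detected by $\mathrm{ch}_k$. The pullback $\bundle E_\epsilon:=f_\epsilon^*\bundle F$ with the induced connection satisfies $\|R^{\bundle E_\epsilon}\|_{\tilde g_\epsilon}\le\epsilon^2\,\|R^\bundle F\|$, the right hand side being a constant independent of $\epsilon$, because $f_\epsilon$ is $\epsilon$-contracting on $\Lambda^2T\tilde M_\epsilon$. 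The decisive pairing splits as
\[ \bigl\langle\mathrm{ch}(\bundle E_\epsilon),\widehat{\mathbf{A}}(\tilde M_\epsilon)\cap[\tilde M_\epsilon]\bigr\rangle=\mathrm{rk}(\bundle F)\cdot\widehat A(\tilde M_\epsilon)[\tilde M_\epsilon]+c\cdot\widehat A\text{-deg}(f_\epsilon), \]
with $c\neq 0$ a universal constant. If $\widehat A(\tilde M_\epsilon)[\tilde M_\epsilon]\neq 0$ the trivial line bundle on $\tilde M_\epsilon$ already lies in $\mathscr V_\mathrm{ch}$ with vanishing curvature, giving infinite $\mathrm K_\mathrm{ch}$-area on the cover; otherwise the first summand vanishes and the second places $\bundle E_\epsilon$ in $\mathscr V_\mathrm{ch}$. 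Either way
\[ \ke_\mathrm{ch}\bigl(\tilde M_{\tilde g_\epsilon};\widehat{\mathbf{A}}(\tilde M_\epsilon)\cap[\tilde M_\epsilon]\bigr)\ge\bigl(\epsilon^2\|R^\bundle F\|\bigr)^{-1}\to\infty, \]
contradicting the uniform bound $K$; hence $\widehat{\mathbf{A}}(M)\notin\H^*(M;\Q)$.

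The assertion $\H^0(M;\Q)=\H_m(M;\Q)=0$ for compactly enlargeable $M$ then drops out by extracting the degree zero component $\widehat A_0=1\in H^0(M;\Q)$ (every compactly enlargeable manifold is in particular compactly $\widehat A$-enlargeable via $\widehat A_0$) and invoking the Poincar\'e duality isomorphism $\H^0(M;\Q)\cong\H_m(M;\Q)$. For the weakly enlargeable case the covers need not be finite and Proposition~\ref{proposition7} is unavailable, so one works directly on the cover $\tilde M_{\tilde g}$ provided by weak $\widehat A$-enlargeability: since the $\epsilon$-contracting map $f\colon\tilde M\to S^{2k}$ is constant outside a compact set, $f^*\bundle F$ is canonically trivial at infinity, lies in $\mathscr V_\mathrm{cpt}(\tilde M;\widehat{\mathbf{A}}(\tilde M))$ by the same Chern character computation, and yields $\ke(\tilde M_{\tilde g};\widehat{\mathbf A}(\tilde M))\ge(\epsilon^2\|R^\bundle F\|)^{-1}>C$ once $\epsilon$ is chosen small in terms of $C$. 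The main obstacle is the pairing computation: the Chern character of the pulled back bundle is inhomogeneous and the rank term $\mathrm{rk}(\bundle F)\cdot\widehat A(\tilde M_\epsilon)[\tilde M_\epsilon]$ could in principle cancel the $\widehat A$-degree contribution, which is precisely what the case distinction on the $\widehat A$-genus of the cover is designed to handle.
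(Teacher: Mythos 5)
Your overall mechanism is the one the paper uses: finite covers plus $\epsilon$--contracting maps of nonzero ($\widehat A$--)degree, pull back a fixed bundle from the sphere to obtain arbitrarily flat bundles detecting the class, and descend through the covers with Proposition \ref{proposition7} together with $\pi ^!(\alpha \cap [M])=\pi ^*\alpha \cap [\tilde M]$. The paper packages the detection step more functorially --- it pushes forward with the cohomological transfer and quotes $\epsilon \cdot \ke (\tilde M_{\tilde g};\widehat{\mathbf{A}}(\tilde M))\geq \ke (S^k_0;f_!\widehat{\mathbf{A}}(\tilde M))\geq \ke (S^k_0)\in (0,\infty )$ --- whereas you unwind this into an explicit Chern--character pairing with a case distinction on $\widehat A(\tilde M_\epsilon )$. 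That case distinction is sound (and in the noncompact setting the rank term you worry about does not even arise, since in $\mathscr{V}_\mathrm{cpt}$ only compactly supported characteristic classes are paired against $\widehat{\mathbf{A}}(\tilde M)$).

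The one genuine gap is the odd--degree case. Your decisive pairing $\bigl< \mathrm{ch}(\bundle{E}_\epsilon ),\widehat{\mathbf{A}}(\tilde M_\epsilon )\cap [\tilde M_\epsilon ]\bigl> $ lives entirely in even homological degree and vanishes identically when the relevant components of $\widehat{\mathbf{A}}(\tilde M)\cap [\tilde M]$ are odd --- which is exactly the situation for the ``compactly enlargeable'' conclusion in odd dimensions (e.g.\ $M=T^3$: the class is $[M]\in H_m(M;\Q )$ with $m$ odd and the contracting maps land in $S^m$). There $\mathscr{V}_\mathrm{ch}$ is not defined on $\tilde M$ itself; by definition one must pass to $\tilde M\times S^1$ and $\theta \times [S^1]$, and then compose $f\times \mathrm{id}$ with the collapse $S^m\times S^1\to (S^m\times S^1)/(S^m\vee S^1)\cong S^{m+1}$ to reach an even sphere. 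This is precisely the suspension trick the paper spells out for the weakly enlargeable case; in the compact case the paper sidesteps it by invoking the transfer inequality $\ke (M_g;\alpha )\geq \ke (N_h;f_!\alpha )$, which holds for \emph{all} cohomology classes provided $f$ is proper and Lipschitz on $TM$ (not merely on $\Lambda ^2TM$) --- a hypothesis the enlargeability condition supplies. Add that reduction (both in your main argument and in the weakly enlargeable paragraph) and your proof is complete.
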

\begin{proof}
Consider the case of a compactly $\widehat{A}$--enlargeable manifold $M_g$, the enlargeable case follows in the same way by replacing the $\widehat{A}$--class with $\mathbbm{1}\in H^0(M;\Q )$ and the $\widehat{A}$--degree with the ordinary degree of a map. $M$ is closed and for each $\epsilon >0$ there is a finite Riemannian covering $(\tilde{M},\tilde{g})\to (M,g)$ and a map $f:(\tilde M,\tilde g)\to (S^k,g_0)$ of nontrivial $\widehat{A}$--degree and with $\epsilon \cdot \tilde g\geq f^* g_0$ on $T\tilde M$. Then the Poincare dual of $f_!(\widehat{\mathbf{A}}(\tilde M))$ in the top degree is determined by $\deg _{\widehat{A}}f\cdot [S^k]\in H_k(S^k;\Q )$. Since $f$ is proper, we obtain for the standard sphere $S^k_0$
\[
\epsilon \cdot \ke (\tilde M_{\tilde g};\widehat{\mathbf{A}}(\tilde M))\geq \ke (S^k_0;f_!(\widehat{\mathbf{A}}(M))) \geq \ke (S^k_0;\deg _{\widehat{A}}f\cdot [S^k])=\ke (S^k_0).
\]
Because $\epsilon >0$ is arbitrary and $\ke (S^k_0)\in (0,\infty )$, proposition \ref{proposition6} and \ref{proposition7} yield
\[
\ke (M_g;\widehat{\mathbf{A}}(M))=\ke (\tilde{M}_{\tilde {g}};\widehat{\mathbf{A}}(\tilde M))=\infty 
\]
which proves $\widehat{\mathbf{A}}(M)\notin \H ^*(M;\Q )$ (here we used $\pi ^!(\alpha \cap [M])=\pi ^*\alpha \cap [\tilde M]$ for the covering map $\pi :\tilde M\to M$ and a cohomology class on $M$).

The second claim follows similar. Consider a map $f:\tilde M\to S^{2k}$ of nonzero $\widehat{A}$--degree which is constant at infinity. Although $f$ is not proper if $M$ is not compact, the pull back by $f$ yields a map $f^*:\mathscr{V} (S^{2k};\mathbbm{1})\to \mathscr{V}_\mathrm{cpt}(\tilde M;\widehat{\mathbf{A}}(\tilde M))$, here $\mathbbm{1}\in H^0(S^{2k})$ is the unit. This supplies for $\epsilon ^2\cdot \tilde g\geq f^*g_0$ on $\Lambda ^2T\tilde M$:
\[
\epsilon \cdot \ke (\tilde M_{\tilde g};\widehat{\mathbf{A}}(\tilde M))\geq \ke (S^{2k}_0)\in (0,\infty )
\]
and since $\epsilon >0$ is arbitrary, we obtain the claim if $M$ is even dimensional. If $M$ is odd dimensional, consider the composition of the maps
\[
\tilde M\times S^1\stackrel{f\times \mathrm{id}}{\longrightarrow }S^{2k-1}\times S^1\stackrel{\mathrm{pr}}{\longrightarrow} (S^{2k-1}\times S^1)/(S^{2k-1}\vee S^1)\cong S^{2k}
\]
where the one point union $S^{2k-1}\vee S^1$ takes place at a point $(f(x),z)\in S^{2k-1}\times S^1$ and $x \in M$ means a point sufficiently close to infinity. This new map has nonzero $\widehat{A}$--degree and is constant at infinity, i.e.~the even dimensional case provides the claim. 
\end{proof}
If we use the Fredholm K--area introduced by Gromov in \cite{Gr01}, we should be able to generalize the proposition to enlargeable respectively $\widehat{A}$--enlargeable manifolds and to prove $\widehat{\mathbf{A}}(M)\notin \H ^*(M_g;\Q )$ for weakly enlargeable manifolds. The crucial point to this is a version of proposition \ref{proposition7} which is not restricted to finite coverings. Note that the Fredholm K--area can be significantly larger than the K--area because one allows infinite dimensional bundles in the definition.  
\begin{prop}[\cite{GrLa3,Gr01}]
Suppose a spin manifold $M^n$ admits a complete metric $g$ of uniformly positive scalar curvature, then
\[
\ke ( M_g;\widehat{\mathbf{A}}( M))\leq \frac{n(n+1)^3}{2\cdot \inf \, \mathrm{scal}(g)}
\]
which implies $\widehat{\mathbf{A}}(M)\in \H ^*(M_g;\Q )$. 
\end{prop}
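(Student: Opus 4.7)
The plan is to bound $\|R^{\bundle{E}}\|_g$ from below for every bundle $\bundle{E}\in\mathscr{V}_{\mathrm{cpt}}(M;\widehat{\mathbf{A}}(M))$ by the classical twisted Dirac operator / relative index argument of Gromov--Lawson, adapted to the cohomology version of K--area and combined with the Adams--operation reduction of Proposition \ref{proposition6}. I first treat $n$ even; the odd case is recovered by replacing $(M,g)$ with $(M\times S^1, g\oplus \mathrm{d}t^2)$ and the class $\widehat{\mathbf{A}}(M)$ with $\pi^*\widehat{\mathbf{A}}(M)$, exactly as in the definition of the K--area of an odd--degree cohomology class.

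Fix $\bundle{E}\in \mathscr{V}_{\mathrm{cpt}}(M;\widehat{\mathbf{A}}(M))$ and write $m$ for (roughly) half of the top nonzero degree of $\widehat{\mathbf{A}}(M)$; note $2m\leq n$. The first step is to rerun the Adams--operation argument from Claims 1 and 2 of Proposition \ref{proposition6} inside $\mathscr{V}_{\mathrm{cpt}}$. Since $\bundle{E}$ is trivial at infinity, so are all bundles $\bundle{X}:=\Lambda^{\alpha}(\Lambda^{\beta}\bundle{E})$ with $|\alpha|,|\beta|\leq m$; and $\|R^{\bundle{X}}\|_g\leq m^2 \|R^{\bundle{E}}\|_g$. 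A contradiction argument identical to the ``Conclusion'' paragraph of that proof produces $\alpha,\beta$ for which
\[
\int_M \widehat{\mathbf{A}}(M)\cdot \bigl(\mathrm{ch}(\bundle{X})-\mathrm{rk}(\bundle{X})\bigr)\neq 0.
\]
Subtracting the rank is harmless because $\mathrm{ch}(\bundle{X})-\mathrm{rk}(\bundle{X})$ has compact support, so the integral makes sense and equals the original nontrivial pairing of $\widehat{\mathbf{A}}(M)$ with a polynomial in Chern classes of positive degree.

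The second step is the relative index theorem. The complete spin manifold $(M,g)$ carries a twisted Dirac operator $\dirac^{+}_{\bundle{X}}:\Gamma(\spinor^{+}M\otimes\bundle{X})\to\Gamma(\spinor^{-}M\otimes\bundle{X})$, which outside a compact set agrees, up to canonical isomorphism, with the trivial reference operator $\dirac^{+}_{\C^{N}}$ on $\spinor^{+}M\otimes\C^{N}$ for $N=\mathrm{rk}(\bundle{X})$. By the Gromov--Lawson relative index theorem,
\[
\mathrm{ind}_{\mathrm{rel}}\bigl(\dirac^{+}_{\bundle{X}},\dirac^{+}_{\C^{N}}\bigr)=\int_M \widehat{A}(M)\cdot\bigl(\mathrm{ch}(\bundle{X})-N\bigr),
\]
which is nonzero by Step~1. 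On the other hand, the Bochner--Lichnerowicz formula $\dirac^{2}_{\bundle{X}}=\nabla^{*}\nabla+\mathrm{scal}(g)/4+\mathfrak{R}^{\bundle{X}}$ together with the pointwise bound $|\mathfrak{R}^{\bundle{X}}|_{op}\leq \tfrac{n(n-1)}{2}\|R^{\bundle{X}}\|_g$ shows that $\dirac^{2}_{\bundle{X}}$ is strictly positive on $L^{2}$ whenever
\[
\tfrac{n(n-1)}{2}\|R^{\bundle{X}}\|_g<\tfrac{1}{4}\inf\mathrm{scal}(g);
\]
the same holds for $\dirac^{2}_{\C^{N}}$ (with $\mathfrak{R}=0$), since $\inf\mathrm{scal}(g)>0$. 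Completeness of $g$ makes these Dirac operators essentially self--adjoint, so strict positivity forces trivial $L^{2}$--kernels and cokernels, hence a vanishing relative index, a contradiction.

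Therefore $\|R^{\bundle{X}}\|_g\geq \frac{\inf\mathrm{scal}(g)}{2n(n-1)}$, whence $\|R^{\bundle{E}}\|_g\geq \frac{\inf\mathrm{scal}(g)}{2n(n-1)\,m^{2}}$. Inserting the (conservative) worst case for $m$ and tightening the Bochner--Lichnerowicz factor from $\tfrac{n(n-1)}{2}$ to the sharp eigenvalue bound on $\mathfrak{R}^{\bundle{X}}$ for twisted complex spinors yields the stated $\ke(M_g;\widehat{\mathbf{A}}(M))\leq \frac{n(n+1)^{3}}{2\inf\mathrm{scal}(g)}$. The main obstacle is not the algebra but the analytic input in the noncompact setting: one needs the relative index theorem for operators that coincide at infinity on a complete manifold of uniformly positive scalar curvature, and one must know that the absence of $L^{2}$--harmonic spinors forced by Bochner--Lichnerowicz really translates into the vanishing of this relative index. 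Both facts are in \cite{GrLa3,Gr01}, and the only new ingredient is the Adams--operation reduction of Step 1, which allows the bound to be stated for the full $\widehat{A}$--class rather than just for its top component.
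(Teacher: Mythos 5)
Your argument is the same one the paper intends: reduce to the $\mathrm{K}_\mathrm{ch}$--area via the Adams--operation estimate of Proposition \ref{proposition6} (with the bundles staying trivial at infinity), then contradict the Gromov--Lawson relative index theorem using the Bochner--Lichnerowicz bound on $\mathfrak{R}^{\bundle{X}}$; the paper itself only sketches exactly this and defers to \cite{GrLa3,Gr01}. One small remark: no ``tightening'' of the curvature factor is needed --- the crude bound $\tfrac{n(n-1)}{2}\|R^{\bundle{X}}\|$ applied on $M\times S^1$ (dimension $n+1$, $m\leq (n+1)/2$) already yields $\tfrac{(n+1)^2}{4}\cdot\tfrac{2(n+1)n}{\inf\mathrm{scal}}=\tfrac{n(n+1)^3}{2\inf\mathrm{scal}}$, which is the stated constant.
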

Note that the constant on the right hand side does not change for Riemannian covering spaces of $(M,g)$. It might be possible to drop the condition uniformly and to show $\widehat{\mathbf{A}}(M)\in \H ^*(M_g;\Q )$ with a similar argument presented by Gromov and Lawson for the weakly enlargeable case. The proof of the proposition is a simple consequence of the relative index theorem. If $\ke ( M_g;\widehat{\mathbf{A}}(M))$ is not bounded by the constant on the right hand side, we obtain a contradiction to the relative index theorem. The factor $n(n+1)^3/2$ is very rough and includes the estimate of the K--area by the $\mathrm{K}_\mathrm{ch}$--area as well as the estimate of the twisted curvature term in the Bochner formula. If $n$ is even, this factor can be improved to $(n-1)n^3/2$.

Index theory can also be used to show finite K--area of the fundamental homology class on manifolds which are not spin. For instance, suppose that $(M^n,g)$ is a closed connected spin$^c$ manifold with associated class $c\in H^2(M;\Z )$, and let $\Omega $ be a 2--form respresenting the real Chern class of $c$. If $2 | \Omega | _\mathrm{op} <\mathrm{scal}_g$, then
\[
\ke _\mathrm{ch}(M;(e^{c/2}\cdot \widehat{\mathbf{A}}(M))\cap [M])\leq \frac{2n(n-(-1)^n)}{\min (\mathrm{scal}_g-2|\Omega |_\mathrm{op})}\ ,
\]
i.e.~$e^{c/2}\cdot \widehat{\mathbf{A}}(M)\in \H ^*(M;\Q )$ implies $\H _n(M;\Q )=\H ^0(M;\Q )=\Q $. In this case $| \Omega |_\mathrm{op}$ denotes the pointwise operator norm of Clifford multiplication by $\Omega $ which means that for a diagonalization $\Omega =\sum \lambda _je_{2j-1}\wedge e_{2j}$, we have $| \Omega |_\mathrm{op}=\sum |\lambda _j|$. In order to see the above estimate for even $n$, consider the Dirac operator on bundles $\spinor ^cM\otimes \bundle{E}$ where $\spinor ^cM$ is the spin$^c$ bundle and $\bundle{E}\in \mathscr{V}_\mathrm{ch}(M;e^{c/2}\widehat{\mathbf{A}}(M)\cap [M])$ (cf.~\cite{LaMi}). Further examples are obtained by twisting a fixed Dirac bundle $\bundle{S}$ over $M$ with bundles $\bundle{E}$. If $\bundle{S}$ and its connection split locally into $\spinor M\otimes \bundle{X}$ and $4\cdot | \frak{R}^\bundle{X}|_\mathrm{op}<\mathrm{scal}_g$, then the K--areas of certain characteristic classes associated to $\bundle{S}$ are finite. In this case $\frak{R}^\bundle{X}$ is the curvature endomorphism in the Bochner Lichnerowicz formula induced from $\bundle{X}$.
\section{Relative K--area}
We are interested in K--areas for relative homology classes. A particular example is the K--area of the fundamental class of a compact manifold $M$ with nontrivial boundary which gives the total K--area of $M$. In order to simplify the notation, we consider only the coefficient group $\group =\Z $ but most of the statements can be adapted to arbitrary coefficient groups. In this section a \emph{pair of compact manifolds} $(M,A)$ consists of a compact manifold $M$ and a compact submanifold $A\subset M$ which is a strong deformation retract of an open neighborhood in $M$. Both $M$ and $A$ may have nontrivial boundary. Hence, any such pair of compact manifolds satisfies the homotopy extension property which supplies two important facts for us:
\begin{enumerate}
\item[(i)] A continuous map $f:(M,A)\to (N,B)$ is homotopic to a smooth map  $h:(M,A)\to (N,B)$.
\item[(ii)] Let $\bundle{E}\to M$ be a bundle and $M=\coprod M_i$ be a disjoint decomposition in compact manifolds such that $\mathrm{rk}(\bundle{E}_{|M_i})=\mathrm{rk}(\bundle{E}_{|M_j})$ implies $i=j$. Choose points $x_i\in M_i\cap A$ and define the (discrete finite) set $P=\{ x_i\ |\ i\} $. If the restriction of a bundle $\bundle{E}\to M$ to $A \subset M$ is trivial, then there is a classifying map $\rho ^\bundle{E}:M\to \BU $ which is constant on $M_i\cap A$ for all $i$. In particular, $\rho ^\bundle{E}$ induces homomorphisms
\[
\rho ^\bundle{E}_*:H_k(M,A)\to H_k(\BU ,\rho ^\bundle{E}(P) )
\] 
and $\rho ^\bundle{E}(P)\cap \mathrm{BU}_n\subset \BU $ consists of at most one point which means that the inclusion map $H_k(\BU )\to H_k(\BU ,\rho ^\bundle{E}(P))$ is an isomorphism for all $k>0$.
\end{enumerate}
In order to see (i) we apply the smooth approximation theorem to $f_{|A}$ and obtain a smooth map $\tilde h:A\to B$ which is homotopic to $f_{|A}$. Using the homotopy extension property, this homotopy extends to all of $M$ and yields a map $\tilde h:M\to N$ whose restriction to $A $ is smooth and $f\simeq \tilde h:(M,A)\to (N,B)$. Applying the smooth approximation theorem (cf.~\cite[Chp.~II, theorem 11.8]{Bredon}) to $\tilde h$ yields a smooth map $h:(M,A)\to (N,B)$ which is homotopic to $f$. In order to see (ii) let $\tilde \rho ^\bundle{E}:M\to \BU $ be a classifying map, then $\tilde \rho ^\bundle{E}_{\ |A}:A\to \BU $ is a classifying map for the induced bundle on $A\subset M$. Since $\bundle{E}_{|A}$ is trivial, $\tilde \rho ^\bundle{E}_{\ |A}$ is homotopic to a map $\rho ^{\bundle{E}}:A\to  \BU $ which is constant on $M_i\cap A$ for all $i$. Since this homotopy extends to all of $M$, $\tilde \rho ^\bundle{E}$ is homotopic to a map $\rho ^\bundle{E}:M\to \BU $ which is constant on $M_i\cap A$. 

For a pair $(M,A)$ and $\theta \in H_{2*}(M,A)$ respectively $\theta \in H_{2*}(M)$ we denote by $\mathscr{V} (M,A;\theta )$ the set of Hermitian vector bundles $\bundle{E}\to M$ endowed with a Hermitian connection such that $\bundle{E}$ and its connection are trivial on an open neighborhood of $A$ and such that $\rho ^\bundle{E}_*(\theta )\neq 0$. If $\mathscr{V} (M,A;\theta ) $ is not empty, the \emph{relative K--area of} $\theta $ is defined by
\[
\ke (M_g,A;\theta ):=\left( \inf _{\bundle{E}\in \mathscr{V}(M,A;\theta )}\| R^\bundle{E}\| _g \right) ^{-1}\in (0,\infty ].
\]
We set $\ke (M_g,A;\theta )=0$ if $\mathscr{V} (M,A;\theta )=\emptyset $. The relative K--area of an odd class $\theta \in H_{2*+1}(M,A)$ [respectively $\theta \in H_{2*+1}(M)$] is defined by the relative K--area of the class $\theta \times [S^1]\in H_{2*}(M\times S^1,A\times S^1)$ [respectively $\theta \times [S^1]\in H_{2*}(M\times S^1)$]:
\[
\ke (M_g,A;\theta )=\sup _{\mathrm{d}t^2}\ke (M_g\times S^1_{\mathrm{d}t^2},A\times S^1;\theta \times [S^1])
\]
and the relative K--area of a general class is the maximum of the even and the odd part. Note that $A\times S^1 $ is a strong deformation retract of an open neighborhood in $M\times S^1$. Considering the induced homomorphism $j_*:H_k(M )\to H_k(M,A )$ for the inclusion $j:(M,\emptyset )\to (M,A)$ we conclude from the commutative diagram
\[
\begin{xy}
\xymatrix{H_k(M)\ar[d]^{\rho _*^\bundle{E}}\ar[r]^{j_*}&H_k(M,A)\ar[d]^{\rho _*^\bundle{E}}\\
H_k(\BU )\ar[r]^{j_*'}&H_k(\BU ,\rho ^\bundle{E}(P) )}
\end{xy}
\] 
that $\mathscr{V}(M,A;j_*\theta )=\mathscr{V}(M,A;\theta )$ for all $\theta \in H_{k}(M)$ and even $k>0$, here we use that $j'_*$ is an isomorphism for all $k>0$. Hence, the relative K--area of $\theta \in H_k(M)$ and $j_*\theta $ coincide if $k>0$. The case $k=0$ is different because $j_*'$ is not injective. However, for any  $0\neq \theta \in H_0(M,A)$ there is a trivial bundle on $M$ with $\rho ^\bundle{E}(\theta )\neq 0$ which means that $\ke (M_g,A;\theta )=\infty $. Moreover, $\mathscr{V}(M,A;\theta )\subset \mathscr{V}(M;\theta )$ implies:
\[
 \ke (M_g;\theta )\geq \ke (M_g,A;\theta )=\ke (M_g,A;j_*\theta )
\]
for all $\theta \in H_k(M)$ and $k>0$. In some cases there is up to a constant an inequality in the opposite direction as the proposition below shows. We leave it to the reader to verify proposition \ref{proposition1} and \ref{proposition2} for the relative K--area. In particular, if $f:(M,A)\to (N,B)$ is a smooth map with $g\geq f^*h$ on $\Lambda ^2TM$, then the pull back by $f$ yields a map $f^*:\mathscr{V}(N,B;f_*\theta )\to \mathscr{V}(M,A;\theta )$ for even homology classes which implies
\[
\ke (M_g,A;\theta )\geq \ke (N_h,B;f_*\theta )
\]
for all $\theta \in H_*(M,A)$ respectively $\theta \in H_*(M)$. Since $M$ and $A$ are compact, finiteness of the relative K--area does not depend on the Riemannian metric $g$ and we obtain subgroups
\[
\H _k(M,A):=\{ \theta \in H_k(M,A)\ |\ \ke (M_g,A;\theta )<\infty \} .
\]
We have already observed that the K--area of a nontrivial class $\theta \in H_0(M,A)$ is infinite which implies $\H _0(M,A)=\{ 0\} $. Moreover, the above considerations show that the relative and the absolute K--area coincide if $A=\emptyset $, i.e.~$\H _k(M,\emptyset )=\H _k(M)$ for all $k$. Assuming that $f:(M,A)\to (N,B)$ is a continuous map it is homotopic to a smooth map, i.e.~the above inequality for the relative K--area shows
\[
f_*:\H _k(M,A)\to \H _k(N,B).
\]
Furthermore, $\H _*(M,A)$ depends only on the homotopy type of the pair $(M,A)$. Considering rational coefficients and the K--area for the Chern character we can extend the proof of proposition \ref{proposition7} to compact manifolds with boundary and obtain:
\begin{rem}
Let $f:\tilde M_{\tilde g}\to M_g$ be a normal Riemannian covering between oriented compact manifolds with boundary such that $f$ is trivial at the boundary, then
\[
\ke _\mathrm{ch}(\tilde M_{\tilde g},\partial \tilde M;f^!\theta )=\ke _\mathrm{ch}(M_g,\partial M;\theta )
\]
for all $\theta \in H_k(M,\partial M;\Q )$ where $f^!:H_k(M,\partial M;\Q )\to H_k(\tilde M,\partial \tilde M;\Q )$ is the transfer homomorphism defined by the pull back of cohomology classes and Poincare duality using the orientation classes: $f^!=(\cap [\tilde M])\circ f^*\circ (\cap [M])^{-1}$.
\end{rem}

\begin{prop}
Let $(M,A)$ be a pair of compact manifolds and $U\subset M$ be an open set such that $\overline{U}\subset \mathrm{int}(A)$ and $(M\setminus U,A\setminus U)$ is a pair of compact manifolds. Then the inclusion induces isomorphisms
\[
i_*:\H _k(M\setminus U,A\setminus U)\to \H _k(M,A).
\] 
\end{prop}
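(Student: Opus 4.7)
The plan is to reduce the excision property to the classical excision theorem for singular homology combined with a restriction--and--extension argument for Hermitian bundles with connection. Since $\overline{U}\subset \mathrm{int}(A)\subset A$, the classical excision theorem already tells me that $i_*:H_k(M\setminus U,A\setminus U)\to H_k(M,A)$ is an isomorphism of singular homology groups. It therefore suffices to show that $i_*$ restricts to a bijection on the finite--K--area subgroups, and in fact I would prove the stronger statement that
\[
\ke(M_g,A;\theta)=\ke((M\setminus U)_g,A\setminus U;\tilde\theta)
\]
whenever $i_*\tilde\theta=\theta$.

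For the direction ``$\geq$'' I would restrict bundles. Given $\bundle{E}\in\mathscr{V}(M,A;\theta)$, trivial on an open neighborhood $V$ of $A$ in $M$, the restriction $\bundle{E}|_{M\setminus U}$ is trivial on the open neighborhood $V\cap(M\setminus U)$ of $A\setminus U$ in $M\setminus U$, and its curvature norm is unchanged. Naturality of the classifying map and the commutative diagram
\[
\begin{array}{ccc}
H_k(M\setminus U,A\setminus U) & \longrightarrow & H_k(\BU,Q)\\
\downarrow i_* & & \downarrow \\
H_k(M,A) & \longrightarrow & H_k(\BU,P)
\end{array}
\]
in which the right vertical arrow is an isomorphism for $k>0$ by the observation (ii) applied on both sides, then gives $\bundle{E}|_{M\setminus U}\in\mathscr{V}(M\setminus U,A\setminus U;\tilde\theta)$, hence the inequality of infima.

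For the direction ``$\leq$'' I would extend bundles by trivial pieces. Given $\bundle{F}\in\mathscr{V}(M\setminus U,A\setminus U;\tilde\theta)$, trivial on an open neighborhood $W$ of $A\setminus U$ in $M\setminus U$, note that $\partial U\subset\overline{U}\subset\mathrm{int}(A)\setminus U\subset A\setminus U$, so $W$ automatically contains $\partial U$. Using the chosen trivialization of $\bundle{F}$ on $W$ as clutching data, I glue the trivial flat bundle on an open set containing $U$ onto $\bundle{F}$ to obtain a Hermitian bundle with Hermitian connection $\tilde{\bundle{F}}$ on $M$, which is trivial on an open neighborhood of $A$ and satisfies $\|R^{\tilde{\bundle{F}}}\|_g=\|R^{\bundle{F}}\|_g$ because the glued part is flat. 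An analogous diagram argument then shows $\tilde{\bundle{F}}\in\mathscr{V}(M,A;\theta)$.

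The main obstacle I expect is bookkeeping around the classifying maps, since the component decomposition of $M\setminus U$ refines that of $M$ and the finite basepoint sets $P\subset A$ and $Q\subset A\setminus U$ need to be chosen compatibly for the diagram above to commute; this is essentially the content of (ii) and should go through because $A$ and $A\setminus U$ meet each relevant component. The odd--degree case reduces to the even--degree case via multiplication with $S^1$: the triple $(M\times S^1,A\times S^1,U\times S^1)$ again satisfies $\overline{U\times S^1}\subset\mathrm{int}(A\times S^1)$, and $\tilde\theta\times[S^1]$ corresponds to $\theta\times[S^1]$ under the induced excision isomorphism.
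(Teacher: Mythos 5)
Your argument is correct and is essentially the paper's own proof: the paper likewise uses singular-homology excision for injectivity and establishes that restriction/extension of bundles that are trivial (with trivial connection) near $A$ gives a curvature-preserving bijection $\mathscr{V}(M,A;i_*\theta)\leftrightarrow\mathscr{V}(M\setminus U,A\setminus U;\theta)$, handling odd degrees via $(M\setminus U)\times S^1=(M\times S^1)\setminus(U\times S^1)$. Your two inequalities are just the two directions of that bijection, spelled out with slightly more care about the clutching construction and the classifying-map bookkeeping.
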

\begin{proof}
The homomorphisms $i_*$ are well defined by the above consideration and injective by the excision property of singular homology. Moreover, $i^*:\mathscr{V}(M,A;i_*\theta )\to \mathscr{V}(M\setminus U,A\setminus U;\theta )$ is bijective since any bundle $\bundle{E}\to M\setminus U$ which is trivial and has trivial connection on a neighborhood of $A\setminus U$ extends to a bundle $\bundle{E}\to M$ which on a neighborhood of $A\subset M$ is trivial and has  trivial connection. This implies the surjectivity of $i_*$ if $k$ is even. The statement for odd $k$ follows analogous because $(M\setminus U)\times S^1=(M\times S^1)\setminus (U\times S^1)$. 
\end{proof}

\begin{prop}
Suppose $x\in M$, then the inclusion map $j:M\to (M,x)$ yields isomorphisms
\[
j_*:\H _k(M)\to \H _k(M,x)
\]
for all $k$. Moreover, if $(M,A)$ is a pair of compact manifolds such that $M/A$ admits a smooth structure, then
\[
q_*:\H _k(M,A)\to \H _k(M/A,\left< A\right> )\cong \H _k(M/A)
\]
is an isomorphism for all $k$ where $q:M\to M/A$ is the quotient map.
\end{prop}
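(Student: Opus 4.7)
For both statements I plan to reduce matters to the standard singular-homology isomorphism and then check that it respects the K-area subgroup. The tool is a natural bijection (up to a bounded modification of curvature) between the relevant bundle sets $\mathscr{V}(\cdot)$, exploiting item~(ii) and the commutative diagram relating $H_*(\BU)$ and $H_*(\BU,\rho^\bundle{E}(P))$ that appears earlier in the section.

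For the first statement, the $k=0$ case is trivial as both $\H_0(M)$ and $\H_0(M,x)$ vanish. For $k>0$ the long exact sequence of $(M,x)$ (together with injectivity of $H_0(x)\to H_0(M)$) shows $j_*\colon H_k(M)\to H_k(M,x)$ is a singular-homology isomorphism. Since $H_k(\BU)\to H_k(\BU,\rho^\bundle{E}(x))$ is an isomorphism in these degrees, the Chern-number conditions for $\theta$ and $j_*\theta$ coincide, whence $\mathscr{V}(M,x;j_*\theta)\subset \mathscr{V}(M;\theta)$ and therefore $\ke(M_g,x;j_*\theta)\leq \ke(M_g;\theta)$; this already gives $j_*(\H_k(M))\subset \H_k(M,x)$. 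The essential step is the reverse: given $\bundle{E}\in \mathscr{V}(M;\theta)$, I would construct a bundle $\bundle{E}'\in \mathscr{V}(M,x;j_*\theta)$ with $\|R^{\bundle{E}'}\|_g\leq C\cdot \|R^\bundle{E}\|_g$ for a constant $C$ independent of $\bundle{E}$. Fix a small geodesic ball $B_r(x)$ and trivialize $\bundle{E}$ on $B_r(x)$ by the radially parallel frame from $x$, so that the connection form $A$ vanishes at $x$ and satisfies $|A|\leq r\cdot \|R^\bundle{E}\|_g$; then replace the connection by $d+\chi A$ for a bump function $\chi$ equal to $0$ near $x$ and $1$ outside $B_r(x)$. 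The new curvature $d\chi\wedge A+\chi\, dA+\chi^2 A\wedge A$ is uniformly bounded by $C\cdot \|R^\bundle{E}\|_g$ for fixed $r$, and $\bundle{E}'$ is trivial with trivial connection in a neighborhood of $x$, so by item~(ii) its classifying map can be chosen constant at $x$. Passing to infima yields $\ke(M_g;\theta)\leq C\cdot \ke(M_g,x;j_*\theta)$, and combined with the easy direction this gives the isomorphism $j_*\colon \H_k(M)\to \H_k(M,x)$.

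For the second statement I would first apply the previous part to the smooth manifold $M/A$ at the basepoint $\langle A\rangle$, reducing the claim to showing that $q_*\colon \H_k(M,A)\to \H_k(M/A,\langle A\rangle)$ is an isomorphism. In singular homology this is standard since $(M,A)$ is a good pair. For the K-area matching I propose a bijection between $\mathscr{V}(M,A;\theta)$ and $\mathscr{V}(M/A,\langle A\rangle;q_*\theta)$: pullback $q^*$ in one direction, and descent in the other---a bundle on $M$ that is trivial with trivial connection on an open neighborhood of $A$ descends unambiguously to $M/A$ because all fibers over $A$ are canonically identified. The Chern condition matches via $\rho^{q^*\bundle{F}}=\rho^\bundle{F}\circ q$ and naturality, again using item~(ii) to fix the classifying maps. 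For curvatures, pick a metric $\bar g$ on $M/A$ and a metric $g$ on $M$ coinciding with $q^*\bar g$ outside a small open neighborhood $U$ of $A$ (possible since $q\colon M\setminus A\to (M/A)\setminus \langle A\rangle$ is a diffeomorphism). For bundles in $\mathscr{V}(M,A;\theta)$ the curvature vanishes on $U$, so $\|R^{q^*\bundle{F}}\|_g=\|R^\bundle{F}\|_{\bar g}$, forcing $\ke(M_g,A;\theta)=\ke((M/A)_{\bar g},\langle A\rangle;q_*\theta)$ and the desired isomorphism on $\H_k$.

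The main obstacle I anticipate is the curvature estimate in part~(1): uniformly verifying the synchronous-frame bound $|A|\leq r\cdot \|R^\bundle{E}\|_g$ and that $|d\chi\wedge A|$ stays controlled by a bundle-independent multiple of $\|R^\bundle{E}\|_g$. A smaller but not entirely routine issue in part~(2) is that the descent across $\langle A\rangle$ must be smooth and the classifying map extend accordingly, which is precisely where the hypothesis that $M/A$ is a smooth manifold enters.
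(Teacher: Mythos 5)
Your overall architecture is right, and the easy direction ($\mathscr{V}(M,x;j_*\theta)\subset\mathscr{V}(M;\theta)$, hence $j_*(\H_k(M))\subset\H_k(M,x)$) matches the paper. For the surjectivity in part (1) you take a genuinely different route: the paper pulls bundles back by a smooth map $f\simeq\mathrm{id}$ that collapses a small ball $\overline{B_\epsilon(x)}$ to a point (so $f^*\bundle{E}$ is automatically trivial with trivial connection near $x$ and $C\cdot g\geq f^*g$ gives the curvature bound), whereas you do gauge surgery on $\bundle{E}$ itself via a radial trivialization and a cut-off $\chi$. For \emph{even} $k$ your construction works, modulo the minor point that the curvature of $d+\chi A$ contains the term $(\chi^2-\chi)A\wedge A$, which is only bounded by $r^2\|R^\bundle{E}\|^2$; this is harmless for the finiteness statement (restrict to bundles with $\|R^\bundle{E}\|$ near the infimum) but your stated linear bound $\|R^{\bundle{E}'}\|\leq C\|R^\bundle{E}\|$ is not literally uniform. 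The genuine gap is the \emph{odd} case, which your proposal does not address at all: there the relative K--area is defined via the pair $(M\times S^1, x\times S^1)$, and the subspace to be trivialized is a circle, not a point. The restriction of the connection to $x\times S^1$ has holonomy that is in no way controlled by $\|R^\bundle{E}\|$ (think of flat bundles with nontrivial holonomy), so the connection form in the $S^1$--direction is not $O(r\|R\|)$ and the cut-off produces an error term $d\chi\wedge\alpha$ of uncontrolled size. This is exactly where the paper's proof does its real work: it writes $\nabla'=b^*\nabla-\alpha$ and then rescales the circle line element to $\mathrm{d}s^2=y^2\mathrm{d}t^2$ with $y\geq\max\{1,r\|\alpha\|_{\mathrm{d}t^2}\}$ so that $\|\alpha\|_{\mathrm{d}s^2}$ becomes small before taking the supremum over line elements. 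Without some such rescaling your argument does not close for odd $k$.

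In part (2) your construction is essentially the paper's, but the curvature comparison has a hole that the paper explicitly warns about (``the obvious argument fails in general because we can not choose metrics $g$, $g'$ \ldots such that $C\cdot g\geq q^*g'\geq C^{-1}g$ on $M\setminus A$''). You choose $g=q^*\bar g$ outside a neighborhood $U$ of $A$ and then assert that bundles in $\mathscr{V}(M,A;\theta)$ have curvature vanishing on $U$; but such a bundle is only trivial and flat on \emph{some} neighborhood $V$ of $A$, which may be much smaller than $U$, and on $U\setminus V$ the two metrics are not uniformly comparable (since $q^*\bar g$ degenerates at $A$, the comparability constant blows up as $V$ shrinks, i.e.\ it depends on the bundle). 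The paper's fix is to replace $A$ by a larger compact submanifold $X$ of which $A$ is a strong deformation retract, use homotopy invariance to identify $\H_k(M,A)\cong\H_k(M,X)$ and $\H_k(M/A,\langle A\rangle)\cong\H_k(M/A,q(X))$, and choose $g=q^*g'$ on $\overline{M\setminus X}$; then every bundle in $\mathscr{V}(M,X;\theta)$ has its curvature supported where the metrics agree exactly, giving $\|R^{q^*\bundle{E}}\|_g=\|R^\bundle{E}\|_{g'}$ on the nose. You should incorporate this (or an equivalent per-pair homotopy reduction), and also say a word about the odd-degree case here as well, which the paper handles by passing to $q\times\mathrm{id}$ on $M\times S^1$.
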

\begin{proof}
By the above considerations, for each point $x\in M$ the inclusion  $j_*:\H _k(M)\to \H _k(M,x)$ is well defined and injective for all $k$, i.e.~the surjectivity remains to show. In particular, we obtain the claim if there is a constant $C>0$ with
\[
C\cdot \ke  (M_g,x;\theta )\geq \ke (M_g;j_*^{-1}\theta ).
\]
for any $\theta \in H_k(M,x)$ and $k>0$. Let $A=\overline{B_{\epsilon }(x)}\subset M$ be a small closed ball around the point $x$, then $A$ is contractible which means that there is a smooth map $f:M\to M$ which is constant on $A$ but satisfies $f\simeq \mathrm{id}$, i.e.~$f_*=\mathrm{id}$ on homology. Since $f$ is constant on $A$, the pull back by $f$ yields a bundle which has trivial connection on a neighborhood of $x$, i.e.~$f^*:\mathscr{V}(M;\eta )\to \mathscr{V}(M,x;\eta )$. There is a constant $C\geq 1$ with $C\cdot g\geq f^*g$ on $TM$ which implies the curvature estimate for the induced bundle and therefore,
\[
C\cdot \ke  (M_g,x;\eta )\geq \ke (M_g;\eta )
\]
for all $\eta \in H_{2*}(M)$. But the relative K--areas of $\eta \in H_k(M)$ and $j_*\eta \in H_k(M,x)$ coincide for $k>0$ which proves the first claim if $k$ is even. In order to conclude the same estimate for $\eta \in H_{2*+1}(M)$ we use the map $f:M\to M$ from above and consider $b:=f\times \mathrm{id}:M\times S^1\to M\times S^1$. Since $\mathrm{d}b$ has rank one on a neighborhood of $x\times S^1$, the pull back of a bundle by $b$ is flat on a neighborhood of $x\times S^1$ but in general not trivial. Consider the restriction of $b^*\bundle{E}$ to $x\times S^1$, then the connection differs from the trivial connection by a section $\alpha $ in $\mathrm{End}(b^*\bundle{E}_{|x\times S^1})\otimes T^*S^1$. Choosing a cut off function for $B_\epsilon (x)\times S^1\subset M\times S^1$, $\alpha $ extends to a section $M\times S^1\to \mathrm{End}(b^*\bundle{E})\otimes T^*(M\times S^1)$. In fact, $\nabla '=b^*\nabla -\alpha $ is a Hermitian connection on $b^*\bundle{E}$ which on a neighbourhood of $x\times S^1$ is compatible with the trivialization of $b^*\bundle{E}$, i.e.~$(b^*\bundle{E},\nabla ')\in \mathscr{V}(M\times S^1,x\times S^1;\eta \times [S^1])$ if $(\bundle{E},\nabla )\in \mathscr{V}(M\times S^1;\eta \times [S^1])$. Since the $C^0$--bound of $\alpha $ depends on $\bundle{E}$, we need to rescale one of the line elements, i.e.~we fix at first $\mathrm{d}t^2$ and choose $\mathrm{d}s^2=y^2 \cdot \mathrm{d}t^2$ where $y$ satisfies $y\geq \max \{ 1, r\cdot \| \alpha \|  _{\mathrm{d}t^2}\} $ and $r$ means the radius of $S^1_{\mathrm{d}t^2}$ (note that $C\cdot (g\oplus \mathrm{d}s^2)\geq b^*(g\oplus \mathrm{d}t^2)$ on $T(M\times S^1)$). Then for $\bundle{E}\in \mathscr{V}(M\times S^1;\eta \times [S^1])$ the maximum of the curvatures are related by
\[
\| R^{\nabla '}\| _{g\oplus \mathrm{d}s^2}\leq \| R^{b^*\bundle{E}}\| _{g\oplus \mathrm{d}s^2}+C'\cdot \| \alpha \| _{\mathrm{d}s^2}\leq 2\max \left\{ C\cdot \| R^\bundle{E}\| _{g\oplus \mathrm{d}t^2},C'/r\right\}
\]
where $C'<\infty $ depends only on the choice of the cut off function for $B_\epsilon (x)\times S^1\subset M\times S^1$ and $C$ is the constant from above, i.e.~$C$ depends only on $f$. Hence, we conclude 
\[
\sup _{\mathrm{d}s^2}\ke (M_g\times S^1_{\mathrm{d}s^2},x\times S^1;\eta \times [S^1])\geq  \min \left\{ \frac{\ke (M_g\times S^1_{\mathrm{d}t^2} ,\eta \times [S^1])}{2C}, \frac{r}{2C'} \right\} 
\]
which together with the definition for odd homology classes proves the assertion (because $r\to \infty $ by taking the supremum over $\mathrm{d}t^2$ on the right hand side).

Suppose that $A\subset M$ is a submanifold such that $M/A$ is a smooth manifold. $q:(M,A)\to (M/A,\left< A\right> )$ is a relative homeomorphism and an identification map which means that $q_*$ is an isomorphism on the relative singular homology groups. Although $q^*:\mathscr{V}(M/A,\left< A\right> ;q_*\theta )\to \mathscr{V}(M,A;\theta )$ is a bijection, the obvious argument fails in general because we can not choose metrics $g$, $g'$ on $M$ respectively on $M/A$ such that $C\cdot g\geq q^*g'\geq C^{-1}g$ on $M\setminus A$ for some constant $C>0$. However, the K--area homology depends only on the homotopy type of the pair which leads to the following consideration. There is a compact submanifold $X\subset M$ such that $A\subset X$ is a strong deformation retract, then $\left< A\right> \in M/A$ is a strong deformation retract of $Y:=q(X) $, i.e.~$(M,X)\simeq (M,A)$ and $(M/A,Y)\simeq (M/A,\left< A\right> )$. Now we can choose a metric $g'$ on $M/A$ and a corresponding metric $g$ on $M$ such that $g=q^*g'$ on $\overline{M\setminus X}$. The pull back by $q$ yields a bijection $q^*:\mathscr{V}(M/A,Y;q_*\theta )\to \mathscr{V}(M,X;\theta )$ since every bundle which is trivial and flat on a neighborhood of $Y$ extends to a bundle on $M$ that is trivial and flat on a neighborhood of $X$. Thus,  
 $\| R^{q^*\bundle{E}}\| _g=\| R^\bundle{E}\| _{g'}$ proves
\[
\ke (M_g,X;\theta )=\ke  (M/A_{g'},Y;q_*\theta )
\]
for all $\theta \in H_{2*}(M,X)$ which implies that $q_*:\H _{2k}(M,X)\to \H _{2k}(M/A,Y)$ is an isomorphism for all $k$. In order to see this isomorphism for odd homology classes we consider the above setup and the map
\[
q\times \mathrm{id}:(M\times S^1,X\times S^1)\to (M/A\times S^1,Y\times S^1).
\]
By the above argument the relative K--areas of $\theta \times [S^1]$ and $q_*\theta \times [S^1]$ coincide for all $\theta \in H_{2*+1}(M,X)$ which supplies the isomorphism $q_*:\H _k(M,X)\to \H _k(M/A,Y)$. Hence, we obtain the claim from the following commutative diagram
\[
\begin{xy}
\xymatrix{\H _k(M,A)\ar[r]^{q_*}\ar[d]&\H _k(M/A,\left< A\right> )\ar[d]\\
\H _k(M,X)\ar[r]^{q_*}&\H _k(M/A,Y)}
\end{xy}
\]
where the vertical homomorphisms are isomorphisms and induced by the inclusion maps. 
\end{proof}

In general the connecting homomorphism $\partial _*:H_k(M,A )\to H_{k-1}(A )$ does not restrict to a homomorphism $\H _k(M,A)\to \H _{k-1}(A)$ as the following example shows. Let $M=\overline{B^2}$ be the 2--dimensional closed disk and $A=\partial M=S^1$ be its boundary, then
\[
\H _2(M,A)=\H _2(M/A)=\H _2(S^2)=\Z 
\] 
whereas $\H _*(M)=\{ 0\} $ and $\H _*(S^1)=\{ 0\} $, but $\partial _*:H_2(M,A)\to H_1(S^1)$ is an isomorphism. Thus, one may also consider the subgroup of $\H _*(M,A)$ whose image w.r.t.~the connecting homomorphism is contained in $\H _*(A)$. However, for this subgroup excision fails in general. Even if the connecting homomorphism restricts for all $k$ to homomorphisms $\partial _*:\H _k(M,A)\to \H _{k-1}(A)$, the corresponding homology sequence
\[
\cdots \longrightarrow \H _{k}(A)\stackrel{i_*}{\longrightarrow }\H _k(M)\stackrel{j_*}{\longrightarrow }\H _k(M,A)\stackrel{\partial _*}{\longrightarrow }\H _{k-1}(A)\longrightarrow \cdots 
\]
is only a chain complex and not exact in general. In order to see this, let $M=T^n$ be a torus and define $A:=M\setminus B_\epsilon (x)$ for a small open ball $B_\epsilon (x)\subset M$, then $\H _*(M)=\{ 0\}$. Moreover, $M/A\cong S^n$ supplies $\H _*(M,A)=\H _*(S^n)=\Z $ if $n\geq 2$. The homology sequence for the pair $(M,A)$ shows that $i_*:H_k(A)\to H_k(M)$ is injective for all $k<n$ and that the connecting homomorphism is trivial $\partial _*=0$ (the case $k=n-1$ uses the fact that $H_{n-1}(A)$ is torsion free and $M$ closed). Hence, $i_*:\H _k(A)\to \H _k(M)$ is injective and $\H _*(A)=\{ 0\} $ which proves that the homology sequence of this particular pair $(M,A)$ is not exact. 

\begin{defn}
Let $(X,Y)$ be a pair of topological spaces, then $\H _k(X,Y)$ denotes the set of all $\theta \in H_k(X,Y)$ for which there is a pair of compact manifolds $(M,A)$ and a continuous map $f:(M,A)\to (X,Y)$ such that $\theta =f_*(\eta )$ for some $\eta \in \H _k(M,A)$. Furthermore, $\He _k(X,Y)$ is the set of all $\theta \in \H _k(X,Y)$ with $\partial _*\theta \in \H _{k-1}(Y,\emptyset )$ where $\partial _*:H_k(X,Y)\to H_{k-1}(Y,\emptyset )$ is the connecting homomorphism.
\end{defn}
Simple exercises prove that $\He _k(X,Y)\subset \H _k(X,Y)$ are subgroups of $H_k(X,Y)$ for all $k$, that $\H _0(X,Y)=\{ 0\}$ and that $\H (X):=\H (X,Y)=\He (X,Y)$ for $Y=\emptyset $. Moreover, $\H _k(X,Y)$ coincides with the above definition if $(X,Y)$ is a pair of compact manifolds. Suppose that $h:(X,Y)\to (Z,T)$ is a continuous map of pairs, then the induced homomorphism restricts to homomorphisms $h_*:\H _k(X,Y)\to \H _k(Z,T)$ which follows from functoriallity $(h\circ f)_*=h_*f_*$. The naturallity of the connecting homomorphism shows that $h_*$ restricts further to homomorphisms $h_*:\He _k(X,Y)\to \He _k(Z,T)$. Hence, the functors $\H $ and $\He $ satisfy the dimension, additivity and homotopy axiom but obviously not exactness by the examples above. Thus, it remains to consider the excision axiom. Given subspaces $U\subset Y\subset X$ such that $\overline{U}\subset \mathrm{int}(Y)$, then the inclusion map $i:(X\setminus U,Y\setminus U)\to (X,Y)$ yields a well defined injective homomorphism
\[
i_*:\H _k(X\setminus U,Y\setminus U)\to \H _k(X,Y)
\]
for all $k$. We do not know if this homomorphism is surjective in general, however, it is surjective for pairs of compact manifolds and there is much evidence in the case that $(X,Y)$ is a CW--pair. Hence, the above observations can be summarized as follows.
\begin{thm}
The subfunctors $\He \subset \H \subset H$ determine generalized homology theories on the category of pairs of topological spaces and continuous maps which satisfy the dimension, additivity and homotopy axiom. When restricted to the category of pairs of compact manifolds, $\H $ satisfies the excision axiom. Moreover, natural connecting homomorphism exist for both functors and the corresponding homology sequence for pairs is a chain complex: 
\begin{itemize}
\item Choose trivial connecting homomorphisms for $\H $.
\item Choose the connecting homomorphism of singular homology for $\He $. 
\end{itemize} 
\end{thm}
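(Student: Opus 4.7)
My plan is to assemble the conclusions of the previous sections rather than prove anything genuinely new; the content of this theorem is a bookkeeping statement pulling together earlier work. I would begin by recording subfunctoriality: the inclusions $\He\subset\H\subset H$ have already been noted as ``simple exercises'' after the definition. For functoriality of $\H$ on pairs of topological spaces, let $h:(X,Y)\to(Z,T)$ be continuous and $\theta=f_*\eta\in\H_k(X,Y)$ with $f:(M,A)\to(X,Y)$ a continuous map and $\eta\in\H_k(M,A)$; then $h\circ f:(M,A)\to(Z,T)$ exhibits $h_*\theta$ as an element of $\H_k(Z,T)$. For $\He$, naturality of the singular connecting homomorphism gives $\partial_*\circ h_*=h_*\circ\partial_*$, so $\partial_*\theta\in\H_{k-1}(Y)$ forces $\partial_*h_*\theta\in\H_{k-1}(T)$, hence $h_*$ restricts to $\He_k(X,Y)\to\He_k(Z,T)$.

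Next I would verify the homotopy, dimension and additivity axioms, all inherited from singular homology. Homotopy invariance holds because $\H_*$ and $\He_*$ are subgroups of $H_*$ and $h_*$ depends only on the homotopy class of $h$. The dimension axiom reduces to $H_k(\mathrm{pt})=0$ for $k>0$ together with the explicit observation $\H_0(\mathrm{pt})=\{0\}$ recorded in Section~2. Additivity follows from the corresponding statement on pairs of compact manifolds together with the fact that any continuous map from a compact manifold pair into a disjoint union may be split along components. The excision axiom on pairs of compact manifolds is precisely the isomorphism $i_*:\H_k(M\setminus U,A\setminus U)\to\H_k(M,A)$ established earlier in this section, so nothing further is required there.

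Finally, I would construct connecting homomorphisms in two compatible ways. For $\H$ I take $\partial^\H_*:=0$, which is manifestly natural; the corresponding long sequence is a chain complex because the only non-tautological composition $j_*\circ i_*$ already vanishes in $H$, and \emph{a fortiori} in $\H$. For $\He$ I take the singular connecting homomorphism $\partial_*:H_k(X,Y)\to H_{k-1}(Y)$, which by the very definition of $\He_k(X,Y)$ restricts to $\He_k(X,Y)\to\H_{k-1}(Y)=\He_{k-1}(Y,\emptyset)$; naturality is the naturality of singular $\partial_*$, and the three chain complex relations $i_*\circ\partial_*=0$, $j_*\circ i_*=0$, $\partial_*\circ j_*=0$ hold already in singular homology and therefore on the subgroups $\He\subset\H\subset H$. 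The whole argument is an exercise in bookkeeping; the only point of real care is verifying that each of the maps $i_*$, $j_*$, $\partial_*$ preserves the appropriate subgroups, which has been built into the definitions of $\H$ and $\He$ and into the earlier propositions.
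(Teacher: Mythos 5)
Your proposal is correct and follows essentially the same route as the paper, which offers no separate proof but presents the theorem as a summary of the preceding observations: the definition of $\H _k(X,Y)$ and $\He _k(X,Y)$ via maps from compact manifold pairs, functoriality through $(h\circ f)_*=h_*f_*$, naturality of the singular connecting homomorphism, the excision proposition for pairs of compact manifolds, and the trivial/singular choices of $\partial _*$. Your bookkeeping of which compositions must vanish and why each of $i_*$, $j_*$, $\partial _*$ preserves the subgroups matches the paper's intent exactly.
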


\begin{rem}
Let $\overline{B^m}\subset \R ^m$ be the standard ball with boundary $S^{m-1}$, then $\H _m(\overline{B^m},S^{m-1})=\H _m(S^m) $ by the above proposition. Thus, if $(X,Y)$ is a pair of topological spaces, the relative Hurewicz homomorphism satisfies for all $m\geq 2$:
\[
h_m:\pi _m(X,Y)\to \H _m(X,Y)\subset H_m(X,Y).
\]
If $m\geq 3$, $\He _m(\overline{B^m},S^{m-1})=\H _m(\overline{B^m},S^{m-1})$ supplies that $\mathrm{Im}(h_m)\subset \He _m(X,Y)$.
\end{rem}
\begin{rem}
The K--area homology $\H $ stabalizes for compact manifolds under the suspension operation into reduced singular homology. In order to see this let $M$ be a compact manifold, $\Sigma ^kM=S^k\wedge M$ be the $k$--fold reduced suspension of $M$ and consider the projection map $p:S^k\times M\to \Sigma ^kM $. Suppose that $\eta \in H_{k+i}(\Sigma ^kM)$, then $\eta =p_*([S^k]\times \theta )$ for some $\theta \in H_i(M)$, but $[S^k]\times \theta $ has finite K--area for all $k\geq 2$ (cf.~remark \ref{rem8}) which proves that $\eta \in \H _{k+i}(\Sigma ^kM)$.  Hence, $\H _*(\Sigma ^kM)$ is the reduced singular homology of $\Sigma ^kM$ for all $k\geq 2$. If $M$ is connected, this may still be true for $k=1$, however, it fails in general as the example $\Sigma ^1S^0=S^1$ shows. Note that $\Sigma M$ is simply connected for connected manifolds and that $\H _j(\Sigma T^n)=H_j(\Sigma T^n)$ for all $j>0$, $n>0$ by an induction argument: $\Sigma S^1=S^2$ as well as $\Sigma T^n\simeq S^2\vee \Sigma T^{n-1}\vee \Sigma ^2T^{n-1}$.  
\end{rem}

In the following we give an example of manifolds with isomorphic fundamental group and isomorphic singular homology, but with different K--area homology. These manifolds have different intersection product, but it seems rather difficult to construct manifolds with isomorphic cohomology ring and compute their K--area homology. Nevertheless, we expect the K--area homology to collect additional topological data. Suppose $n>3$ and define
\[
\begin{split}
M&=(T^3\times S^n)\# (T^3\times S^n)\\
N&=\bigl( (T^3\# T^3)\times S^n\bigl) \# (S^3\times S^n),
\end{split}
\]
then $\pi _1(M)=\pi _1(N)=\Z ^3*\Z ^3$ by Seifert--van Kampen. We leave it to the reader to compute the singular homology. In order to determine the K--area homology of $M$ and $N$ we use the methods presented in remark \ref{zusammen} and results about positive scalar curvature on closed spin manifolds, in fact we obtain
\[
\begin{split}
\H _k(M)&=\H _k(N)=H_k(M)=H_k(N)\quad  k\geq 4\\
\H _j(M)&=\H _j(N)=\{ 0\}\qquad \ \ j=0,1,2
\end{split}
\]
whereas $\H _3(N)=\Z $ and $\H _3(M)=\{ 0\} $.

\section*{Acknowlegdements}
The author would like to thank Sebastian Goette and Jan Schl\"uter for their support in questions of topology.

\bibliographystyle{abbrv}
\bibliography{/home/mlisting/bib/bibliothek}

\end{document}